\newtheorem{theorem}{Theorem}[section]
\newtheorem{lemma}{Lemma}[section]
\newtheorem{proposition}{Proposition}[section]
\newtheorem{definition}{Definition}[section]
\newtheorem{remark}{Remark}[section]
\newcommand{\bal}{\begin{align}}
\newcommand{\bbal}{\begin{align*}}
\newcommand{\beq}{\begin{equation}}
\newcommand{\eeq}{\end{equation}}
\newcommand{\bca}{\begin{cases}}
\newcommand{\eca}{\end{cases}}
\def\div{\mathord{{\rm div}}}
\newcommand{\pa}{\partial}
\newcommand{\fr}{\frac}
\newcommand{\na}{\nabla}
\newcommand{\De}{\Delta}
\newcommand{\cd}{\cdot}
\newcommand{\ep}{\varepsilon}
\newcommand{\dd}{\mathrm{d}}
\newcommand{\R}{\mathbb{R}}
\newcommand{\les}{\lesssim}
\newcommand{\f}{\left}
\newcommand{\g}{\right}
\begin{document}
\bibliographystyle{plain}
\title{On the continuous properties for the 3D incompressible rotating Euler equations}

\author{Jinlu Li$^{1}$,
Yanghai Yu$^{2}$\footnote{E-mail: lijinlu@gnnu.edu.cn; yuyanghai214@sina.com(Corresponding author); zhuneng@ncu.edu.cn} and Neng Zhu$^{3}$\\
\small $^1$ School of Mathematics and Computer Sciences, Gannan Normal University, Ganzhou 341000, China\\
\small $^2$ School of Mathematics and Statistics, Anhui Normal University, Wuhu 241002, China\\
\small $^3$ Department of Mathematics, Nanchang University, Nanchang 330031, China }

\date{\today}

\maketitle\noindent{\hrulefill}

{\bf Abstract:} In this paper, we consider the Cauchy problem for the 3D Euler equations with the Coriolis force in the whole space. We first establish the local-in-time existence and uniqueness of solution to this system in $B^s_{p,r}(\R^3)$. Then we prove that the Cauchy problem is ill-posed in two different sense:
(1) the solution of this system is not uniformly continuous dependence on the initial data in the same Besov spaces, which extends the recent work of Himonas-Misio{\l}ek \cite[Comm. Math. Phys., 296, 2010]{HM1} to the more general framework of Besov spaces;
(2) the solution of this system cannot be H\"{o}lder continuous in time variable in the same Besov spaces. In particular, the solution of the system is discontinuous in the weaker Besov spaces at time zero. To the best of our knowledge, our work is the first one addressing the issue on the failure of H\"{o}lder continuous in time of solution to the classical Euler equations with(out) the Coriolis force.

{\bf Keywords:} Euler equations; Coriolis force; Ill-posedness.

{\bf MSC (2010):} 35B30; 76U05; 76B03.
\vskip0mm\noindent{\hrulefill}

\section{Introduction}

In this paper, we focus on the Cauchy problem for the Euler equations with the Coriolis force in $\R^3$,
describing the motion of perfect incompressible fluids in the rotational framework:
\begin{align}\label{CE}
\begin{cases}
\pa_t u+\Omega e_3\times u+u\cdot \nabla u+\nabla P=0, &\quad (t,x)\in \R^+\times\R^3,\\
\mathrm{div\,} u=0,&\quad (t,x)\in \R^+\times\R^3,\\
u(t=0,x)=u_0(x), &\quad x\in \R^3,
\end{cases}
\end{align}
where the vector field $u(t,x):[0,\infty)\times {\mathbb R}^3\to {\mathbb R}^3$ stands for the velocity of the fluid, the quantity $P(t,x):[0,\infty)\times {\mathbb R}^3\to {\mathbb R}$ denotes the scalar pressure, and $\mathrm{div\,} u=0$ means that the fluid is incompressible.
The constant $\Omega\in\R$ represents the speed of rotation around
the vertical unit vector $e_3 = (0, 0, 1)$ and is called the Coriolis parameter. Problems concerning large-scale atmospheric and oceanic flows are known to be dominated by rotational effects. For this reason, almost all of the models of oceanography and meteorology dealing with large-scale phenomena include a Coriolis force. For example, an oceanic circulation featuring a hurricane is caused by the large rotation. There is no doubt that other physical effects are of similar significance like salinity, natural boundary conditions and so on.  The dispersive effect of rotation in fluid flows has been studied in the literature from various perspectives, see, e.g., geophysical flows \cite{IR,JM,JP}, life span and asymptotic behaviour in the case of fast rotations \cite{ch1,ch2,d,KY} and almost global stability \cite{YGUO}.

Before stating related results for \eqref{CE} precisely, we recall the notion of well-posedness in the sense of Hadamard. We say that the Cauchy problem
\begin{equation}\label{dp}
\begin{cases}
\pa_tf=F(f), \\
f(0,x)=u_0(x),
\end{cases}
\end{equation}
is locally Hadamard well-posed in a Banach space $X$ if the following conditions hold
\begin{enumerate}
  \item (Local existence)\; For any initial data $u_0\in X$, there exists a short time $T = T(u_0) > 0$ and a solution $\mathbf{S}_{t}(u_0)\in\mathcal{C}([0,T),X)$ to the Cauchy problem \eqref{dp};
  \item (Uniqueness)\; This solution $\mathbf{S}_{t}(u_0)$ is unique in the space $\mathcal{C}([0,T),X)$;
  \item (Continuous Dependence)\; The data-to-solution map $u_0 \mapsto \mathbf{S}_{t}(u_0)$ is continuous in the following
sense:  for any $u_0$, any $T_1< T\left(u_0\right)$ and any $\varepsilon>0$, there exists $\delta=\delta(\varepsilon,u_0)$ such that for any $\widetilde{u}_0$, if $\left\|u_0-\widetilde{u}_0\right\|_X \leq \delta$ then $\mathbf{S}_{t}(\widetilde{u}_0)$ exists up to $T_1$ and
$$\|\mathbf{S}_{t}(u_0)-\mathbf{S}_{t}(\widetilde{u}_0)\|_{\mathcal{C}([0,T_1],X)}\leq \varepsilon.$$
\end{enumerate}
Furthermore, we say that the data-to-solution map $u_0 \mapsto \mathbf{S}_{t}(u_0)$ is uniform continuous if for any $T_1 < T$ and any $\varepsilon > 0$, there exists $ \delta=\delta(\varepsilon)> 0$ independent of $u_0$ or $\widetilde{u}_0$ such that, if $\|u_0-\widetilde{u}_0\|_{X}\leq \delta$,
then $\mathbf{S}_{t}(\widetilde{u}_0)$ exists up to $T_1$ and
$$\|\mathbf{S}_{t}(u_0)-\mathbf{S}_{t}(\widetilde{u}_0)\|_{\mathcal{C}([0,T_1],X)}\leq \varepsilon.$$
We would like to emphasize that, in this paper, we say the Cauchy problem \eqref{dp} is {\bf well-posed} in a Banach space $X$ if it enjoys existence, uniqueness and continuous dependence on the initial data. We say that the Cauchy problem \eqref{dp} is {\bf semi-linearly well-posed} (see \cite{NT}) for the initial data in $X$ if it is Hadamard well-posed and in addition, the solution map $u_{0}\mapsto \mathbf{S}_{t}(u_0)$ is uniformly continuous from any bounded subset $B\subset X$ into $C\f([0,T];X\g)$. In particular, we are interested in the lack of continuity with respect to time variable or uniform continuity of solution with respect to the initial data.

\subsection{Known Well/Ill-posedness results}
Let us first review some of the previous works in this direction.

{\bf The classical Euler equations.}\;
In the case of $\Omega=0$, \eqref{CE} reduces to the original Euler equations. Kato \cite{Kato} proved the local well-posedness of classical solution to Euler equations in the Sobolev space $H^s(\mathbb{R}^3)$ for all $s>5/2$. Kato-Ponce \cite{KatoP} extended this result to the Sobolev spaces $W^{s, p}(\mathbb{R}^3)$ of fractional order for $s>3 / p+1,1<p<\infty$. Chae \cite{Chae1,Chae2,Chae3} and Chen-Miao-Zhang \cite{Miao} gave further extensions to the Triebel-Lizorkin spaces $F_{p, q}^s(\mathbb{R}^3)$ with $s>3 / p+1,1<p, q<\infty$ and the Besov spaces $B_{p, q}^s(\mathbb{R}^3)$ with $s>3 / p+1$, $1<p<\infty, 1 \leq q \leq \infty$ or $s=3 / p+1,1<p<\infty, q=1$. However, these two kinds of function spaces are only in the $L^p(1<p<\infty)$-framework since the Riesz transform is not bounded on $L^{\infty}$. The currently-known best result on the local existence was given by Pak-Park \cite{Pak} in the Besov space $B_{\infty, 1}^1(\mathbb{R}^3)$.  Guo-Li-Yin \cite{GLY} proved the continuous dependence of the  Euler equations in the space $B_{p, q}^s(\mathbb{R}^3)$ with $s>3 / p+1$, $1\leq p\leq \infty, 1 \leq q < \infty$ or $s=3 / p+1,1\leq p\leq \infty, q=1$. Later, Cheskidov-Shvydkoy \cite{Cheskidov} proved that the solution of the Euler equations cannot be continuous as a function
of the time variable at $t = 0$ in the spaces $B^s_{r,\infty}(\mathbb{T}^d)$ where $s > 0$ if $2 < r \leq \infty$ and $s>d(2/r-1)$
if $1 \leq r \leq 2$. Furthermore, Bourgain-Li \cite{Bourgain1,Bourgain2} proved the strong local ill-posedness of the Euler equations in borderline Besov spaces $B^{d/p+1}_{p,r}$ with $(p,r)\in[1,\infty)\times(1,\infty]$ when $d=2,3$. Subsequently, Misio{\l}ek-Yoneda \cite{MY} studied the borderline cases and showed that the 2D Euler equations are not locally well-posed in the sense of Hardamard in the $C^1$ space and in the Besov space $B^1_{\infty,1}$.
Recently, Misio{\l}ek-Yoneda \cite{MY2} showed that the solution map for the Euler equations is not even continuous in the space of H\"{o}lder continuous functions and thus not locally Hadamard well-posed in $B^{1+s}_{\infty,\infty}$ with any $s\in(0,1)$. Concerning the non-uniform continuity of the data-to-solution map, we would like to mention that the beautiful results of Himonas-Misio{\l}ek \cite{HM1} covered both the torus $\mathbb{T}^d$ and the whole spaces $\R^d$ cases. More precisely, they proved that the solution map for the Euler equations in bi(tri)-dimension is not uniformly continuous in Sobolev spaces $H^s(\mathbb{T}^d)$ for $s\in \R$ and in $H^s(\mathbb{R}^d)$ for any $s>0$. Bourgain and Li \cite{Bou} settled the border line case $s = 0$.

{\bf The Euler equations with the Coriolis force.}\;
In the case of $\Omega\neq0$, as it is already observed in \cite{ch1,ch2,d}, the Euler-Coriolis system \eqref{CE} exhibits a dispersion phenomenon which is due to the presence of the Coriolis force $\Omega e_3\times u$. For large Coriolis parameter $|\Omega|$, Dutrifoy \cite{d} showed the asymptotics of solutions to vortex
patches or Yudovich solutions as the Rossby number goes to zero for some particular initial data,
and gave a lower bound on the lifespan $T_\Omega$ of the solution to \eqref{CE} as $T_\Omega\gtrsim \log\log|\Omega|$. For $u_0 \in H^s(\mathbb{R}^3)$ with $s>5/2$, Koh-Lee-Takada \cite{KLT} proved that there exists a unique local in time solution $u$ for \eqref{CE} with $\Omega\in\R$ in the class $\mathcal{C}([0, T]; H^s(\mathbb{R}^3)) \cap \mathcal{C}^1([0, T] ; H^{s-1}(\mathbb{R}^3))$ (see also \cite{T,WC}). Moreover, assuming that $s>7/2$, they showed that their solutions can be extended to long-time intervals $\left[0, T_{\Omega}\right]$ provided that the speed of rotation is large enough. V. Angulo-Castillo and L.C.F Ferreira \cite{CF} further gave extensions to the critical Besov space $B^{5/2}_{2,1}(\R^3)$. Jia-Wan \cite{JW} proved the long time existence of classical solutions to \eqref{CE} for initial data in the
Sobolev space $H^s(s > 5/2)$  with a weaker assumption on the lower bound of $|\Omega|$. However, the above mentioned results for \eqref{CE} only involve the short-time existence, uniqueness for any speed of rotation and long-time existence for high speed of rotation.
We should mention that the dispersion relation underlying the linear system
\begin{align*}
\begin{cases}
\pa_t u+\Omega e_3\times u+\nabla P=0, &\quad (t,x)\in \R^+\times\R^3,\\
\mathrm{div\,} u=0,&\quad (t,x)\in \R^+\times\R^3,\\
u(t=0,x)=u_0(x), &\quad x\in \R^3,
\end{cases}
\end{align*}
is given by
$
\Lambda(\xi)=\Omega\frac{\xi_3}{|\xi|}
$.
Compared with the classical Euler equations, we would like to emphasize that this dispersive mechanism exhibits $O(t^{-1})$ decay rate in $L^{\infty}$-norm (see \cite{R}) and is strongly anisotropic and degenerate. It is still surprising that, in the absence of viscosity, the Coriolis force alone is sufficient to stabilize the solutions globally in time in the full 3D setting (see \cite{YGUO}). It is found that the strong rotational effect enhances the temporal decay rate of a certain norm of
the velocity (see \cite{AKL}).
\subsection{Motivation}

A fundamental challenge in mathematical physics is to understand the behavior of solutions for the complex rotating fluids. From the PDE's point of view, it is crucial to know if an equation which models a physical phenomenon is well-posed in the Hadamard's sense: existence, uniqueness, and continuous dependence of the solutions with respect to the initial data. In particular, the lack of continuous dependence would cause incorrect solutions or non meaningful solutions. It is known that, the non-uniform continuity of data-to-solution map suggests that the local well-posedness cannot be established by the contraction mappings principle since this would imply Lipschitz continuity for the solution map.  For certain types of evolutionary equations, for example, the Korteweg-de Vries (KdV) and nonlinear Schr\"{o}dinger equations, the flow map is Lipschitz continuous. Paticularly, the KdV equation is shown to be well-posed in $H^s(\R)$ with $s>-3/4$, then the solution map is Lipschitz on the same $H^s(\R)$ (see \cite{Kenig}). However, for the Burgers equation
$\partial_t u+u\pa_xu=0$, which is the most simple quasilinear symmetric hyperbolic equation,
Kato \cite{Kato75} showed that the dependence on initial data of the Burgers equation in $H^k$ with $k \geq 2$ is continuous but not H\"{o}lder continuous with any prescribed exponent. This example indicates that the continuity of the solution map for a symmetric hyperbolic system is a delicate issue. Chen-Liu-Zhang \cite{CLZ} proved that the solution map of the b-family equation is H\"{o}lder continuous as a map from a bounded set of $H^s(\R),s > 3/2$ with $H^r(\R) (0 \leq r < s)$ topology, to $C([0, T ], H^r(\R))$ for some $T > 0$. In this paper, for the Cauchy problem \eqref{CE}, we shall study the continuity of solution with respect to time variable or uniform continuity of solution with respect to the initial data.

Assume that $u_0\in B^s_{p,r}$, based on the local well-posedness theory for the Euler equations (see \cite[Theorem 7.1]{B}), we know that there exists a solution $u\in \mathcal{C}([0,T];B^s_{p,r})$ for the Euler-Coriolis system \eqref{CE}. Naturally, we may wonder whether or not the solution $u$ can belong to $\mathcal{C}^\alpha([0,T];B^s_{p,r})$ with some $\alpha\in(0,1)$. Therefore, we are interested in the following question:
$$u_0\in B^s_{p,r}\;\Rightarrow\; \exists \;u\in \mathcal{C}([0,T];B^s_{p,r})\;\overset{?}{\Rightarrow}\; u\in\mathcal{C}^\alpha([0,T];B^s_{p,r}) \quad\text{with}\; \alpha\in(0,1).$$

If the initial data $u_0$ has more regularity, that is $u_0\in B^{s'}_{p,r}$ for some $s'>s$, by the interpolation argument, we can deduce that $u\in \mathcal{C}^\alpha([0,T];B^s_{p,r})$ with $\alpha=s'-s$.
In this paper, we will show that there exits initial data $u_0\in B^s_{p,r}$ such that the corresponding solution of the Euler-Coriolis system \eqref{CE} can not belong to $\mathcal{C}^\alpha([0,T];B^s_{p,r})$ with any $\alpha\in(0,1)$.

\subsection{Main Result}
We can now state our main result as follows.

\begin{theorem}\label{th1}
Assume that $u_0\in B^s_{p,r}(\R^3)$ with $(s,p,r)$ satisfying
\begin{align}\label{eq:spr1}
s>\frac{3}{p}+1, (p,r)\in (1,\infty)\times [1,\infty] \quad   \mathrm{or}    \quad s=\frac{3}{p}+1, (p,r)\in (1,\infty)\times\{1\}.
\end{align}
Then there exists a short time $T>0$ such that the Euler-Coriolis system \eqref{CE} admits a unique solution $u\in L^\infty([0,T];B^s_{p,r})$ satisfying that
\begin{align}\label{hx}
\|u(t)\|_{L^\infty_TB^s_{p,r}}\les\|u_0\|_{B^s_{p,r}}.
\end{align}
Furthermore, if $r<\infty$, the above solution $u\in C([0,T];B^s_{p,r})$ is continuous dependent in $B^s_{p,r}$.
\end{theorem}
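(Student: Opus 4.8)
The organizing principle is that the Coriolis term $\Omega e_3\times u$ is linear and of order zero, so once the pressure is removed it is a \emph{bounded} perturbation of the classical incompressible Euler system and the argument runs parallel to the standard Besov-space theory (cf.\ Bahouri--Chemin--Danchin, \cite{Pak,Miao,GLY}). Applying the Leray projector $\mathbb{P}=\mathrm{Id}-\na(-\De)^{-1}\div$ to \eqref{CE} kills $P$ and gives
\beq\label{eq:proj}
\pa_t u+\mathbb{P}(u\cd\na u)+\Omega\,\mathbb{P}(e_3\times u)=0,\qquad \div\,u=0,\qquad u|_{t=0}=u_0.
\eeq
Because $p\in(1,\infty)$, $\mathbb{P}$ is a matrix of order-zero Fourier multipliers (combinations of Riesz transforms), hence bounded on every $B^\sigma_{p,r}(\R^3)$, so $\|\Omega\,\mathbb{P}(e_3\times v)\|_{B^\sigma_{p,r}}\les|\Omega|\,\|v\|_{B^\sigma_{p,r}}$: the rotation term is controlled, at every regularity level, by the quadratic term. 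This is precisely where $p\in(1,\infty)$ is used.

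For existence and \eqref{hx} I would run the usual iteration: $u^0:=0$ and, given the divergence-free $u^n$, let $u^{n+1}$ solve the linear transport problem $\pa_t u^{n+1}+u^n\cd\na u^{n+1}+\na P^{n+1}=-\Omega\,\mathbb{P}(e_3\times u^n)$, $\div\,u^{n+1}=0$, $u^{n+1}|_{t=0}=u_0$, solvable by the linear theory. The $B^s_{p,r}$ transport estimate, the commutator/product estimates valid under \eqref{eq:spr1} (using $B^{s-1}_{p,r}\hookrightarrow L^\infty$, resp.\ $B^{3/p}_{p,1}\hookrightarrow L^\infty$ at the endpoint), and the bound on the rotation source yield, via Gronwall, a time $T=T(\|u_0\|_{B^s_{p,r}},|\Omega|)>0$ and a uniform bound $\|u^n\|_{L^\infty_TB^s_{p,r}}\les\|u_0\|_{B^s_{p,r}}$, i.e.\ \eqref{hx}. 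Estimating $\de u^n:=u^{n+1}-u^n$ in the weaker norm $B^{s-1}_{p,r}$ --- its equation is transport along $u^n$ with sources $\de u^{n-1}\cd\na u^n$ and $\Omega\,\mathbb{P}(e_3\times\de u^{n-1})$, both $\les C\|\de u^{n-1}\|_{B^{s-1}_{p,r}}$ on $[0,T]$ --- shows $(u^n)$ is Cauchy in $C([0,T];B^{s-1}_{p,r})$; the limit $u$ belongs to $L^\infty_TB^s_{p,r}$ by Fatou and solves \eqref{eq:proj}. Uniqueness is the same $B^{s-1}_{p,r}$ Gronwall estimate for the difference of two solutions. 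When $r<\infty$, one upgrades $u\in L^\infty_TB^s_{p,r}\cap C([0,T];B^{s-1}_{p,r})$ to $u\in C([0,T];B^s_{p,r})$ in the standard way for transport equations: $\pa_t S_N u\in L^\infty_TB^s_{p,r}$ makes each $S_Nu$ time-continuous in $B^s_{p,r}$, while $\sup_{[0,T]}\|(\mathrm{Id}-S_N)u(t)\|_{B^s_{p,r}}\to0$ as $N\to\infty$ because $r<\infty$ and the transport flow propagates the smallness of the high-frequency tail from $t=0$.

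The substantive part is \emph{continuous dependence} in the strong norm $B^s_{p,r}$, $r<\infty$: the Cauchy estimate above only gives convergence of solutions in $C([0,T];B^{s'}_{p,r})$ for $s'<s$, which is too weak. I would close it by a Bona--Smith regularization. Let $u_0^n\to u_0$ in $B^s_{p,r}$. Since $r<\infty$ and $\{u_0^n\}\cup\{u_0\}$ is compact, $\de(\ep):=\sup_n\|(\mathrm{Id}-S_{j(\ep)})u_0^n\|_{B^s_{p,r}}\to0$ as $\ep\to0$, where $2^{-j(\ep)}\simeq\ep$. Set $u_0^{n,\ep}:=S_{j(\ep)}u_0^n$, so that $\|u_0^{n,\ep}-u_0^n\|_{B^s_{p,r}}\le\de(\ep)$, $\|u_0^{n,\ep}-u_0^n\|_{B^{s-1}_{p,r}}\le\ep\,\de(\ep)$, $\|u_0^{n,\ep}\|_{B^{s+1}_{p,r}}\les\ep^{-1}\|u_0^n\|_{B^s_{p,r}}$, all uniformly in $n$; let $u^{n,\ep},u^n,u^{\ep},u$ be the corresponding solutions. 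Since the lifespan depends only on the low norm (continuation criterion), all of them live on a common $[0,T]$, and persistence of regularity gives $\|u^{n,\ep}\|_{L^\infty_TB^{s+1}_{p,r}}\les\ep^{-1}$ while the $B^{s-1}_{p,r}$ stability gives $\|u^{n,\ep}-u^n\|_{L^\infty_TB^{s-1}_{p,r}}\les\ep\,\de(\ep)$, uniformly in $n$. For $w:=u^{n,\ep}-u^n$, subtracting the two copies of \eqref{eq:proj} gives
\beq\label{eq:diff}
\pa_t w+\mathbb{P}(u^n\cd\na w)=-\mathbb{P}(w\cd\na u^{n,\ep})-\Omega\,\mathbb{P}(e_3\times w),
\eeq
transported along the \emph{rough} field $u^n$ with the \emph{smooth} field differentiated in the source; hence the $B^s_{p,r}$ transport estimate, $\|w\cd\na u^{n,\ep}\|_{B^s_{p,r}}\les\|w\|_{B^s_{p,r}}\|u^{n,\ep}\|_{B^s_{p,r}}+\|w\|_{B^{s-1}_{p,r}}\|u^{n,\ep}\|_{B^{s+1}_{p,r}}$, and Gronwall give $\|w\|_{L^\infty_TB^s_{p,r}}\les\|u_0^{n,\ep}-u_0^n\|_{B^s_{p,r}}+T\,(\ep\,\de(\ep))\,\ep^{-1}\les\de(\ep)$ uniformly in $n$. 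The same estimate with $u_0^n$ replaced by $u_0$ gives $\|u^{\ep}-u\|_{L^\infty_TB^s_{p,r}}\les\|(\mathrm{Id}-S_{j(\ep)})u_0\|_{B^s_{p,r}}+\de(\ep)\to0$, while for fixed $\ep$, $S_{j(\ep)}u_0^n\to S_{j(\ep)}u_0$ in $B^{s+1}_{p,r}$, so interpolating the $B^{s-1}_{p,r}$ stability against the uniform $B^{s+1}_{p,r}$ bound yields $\|u^{n,\ep}-u^{\ep}\|_{L^\infty_TB^s_{p,r}}\to0$ as $n\to\infty$. Splitting $\|u^n-u\|_{L^\infty_TB^s_{p,r}}\le\|u^n-u^{n,\ep}\|_{L^\infty_TB^s_{p,r}}+\|u^{n,\ep}-u^{\ep}\|_{L^\infty_TB^s_{p,r}}+\|u^{\ep}-u\|_{L^\infty_TB^s_{p,r}}$, choosing $\ep$ small to make the first and third terms small uniformly in $n$ and then $n$ large to kill the middle term, proves the claim.

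The main obstacle is exactly this last step, i.e.\ making every error uniform in $n$; it rests entirely on $r<\infty$ (equismallness of the high-frequency tails of the convergent data sequence), on the sharp gain $\|u_0^{n,\ep}-u_0^n\|_{B^{s-1}_{p,r}}\le\ep\,\de(\ep)$, and on the fact that the difference equation \eqref{eq:diff} puts the low-regularity field in the transport operator and the high-regularity field in the source, so that $\|w\|_{B^{s-1}_{p,r}}\|u^{n,\ep}\|_{B^{s+1}_{p,r}}$ stays bounded. A secondary technical point is the endpoint case $s=3/p+1$, $r=1$, where the transport, commutator and product estimates must all be used at their borderline indices.
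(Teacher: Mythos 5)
Your proposal is correct and follows exactly the route the paper has in mind: the paper omits the proof of Theorem \ref{th1}, remarking only that since the Coriolis term has no impact for $p\in(1,\infty)$ one follows the standard Euler theory of \cite[Theorem 7.1]{B} for existence/uniqueness and \cite[Theorem 1.1]{GLY} for continuous dependence, which is precisely your observation that $\Omega\,\mathcal{P}(e_3\times u)$ is a bounded order-zero perturbation plus the usual iteration and Bona--Smith argument. No discrepancy to report.
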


\begin{remark}
Following the proof of the local well-posedness results for the Euler equations (see \cite[Theorem 7.1]{B} and \cite[Theorem 1.1]{GLY}), we can prove Theorem \ref{th1}. Since the procedure is standard, we omit it.
\end{remark}

Next, we consider the property of continuous dependence of solutions of the Cauchy
problem for \eqref{CE} and show that the solution map for the Euler-Coriolis system \eqref{CE} cannot be uniformly continuous in Besov spaces $B^s_{p,r}$. From now on, we denote
 \begin{align*}
&\mathbf{S}_{t}(u_0)=\text{\rm the solution map of}\, \eqref{CE}\, \text{\rm with initial data}\, u_0
 \end{align*}
and denote any bounded subset $U_R$ in $B^s_{p,r}(\mathbb{R}^3)$ by
$$U_R:=\left\{u_0\in B^s_{p,r}(\mathbb{R}^3): \|u_0\|_{B^s_{p,r}(\mathbb{R}^3)}\leq R,\;\mathrm{div} u_0=0\right\}.$$
\begin{theorem}\label{th2}
Assume that $(s,p,r)$ satisfying \eqref{eq:spr1}.  The data-to-solution map $u_0\mapsto \mathbf{S}_{t}(u_0)$ of the Cauchy problem \eqref{CE} is not uniformly continuous from any bounded subset in $U_R\subset B^s_{p,r}$ into $\mathcal{C}([0,T];B^s_{p,r})$. More precisely, there exist two initial data sequences $f_n+g_n$ and $f_n$ such that
\bbal
&\f\|(f_n)^{(2)}\g\|_{B^s_{p,r}}\approx 1 \quad\text{and}\\
 &\lim_{n\rightarrow \infty}\f\|(f_n)^{(1)}\g\|_{B^s_{p,r}}=\lim_{n\rightarrow \infty}\f\|(f_n)^{(3)}\g\|_{B^s_{p,r}}=\lim_{n\rightarrow \infty}\|g_n\|_{B^s_{p,r}}= 0,
\end{align*}
but the distance of second component of the difference between the solution sequences $\mathbf{S}_t(f_n+g_n)$ and $\mathbf{S}_t(f_n)$
\bbal
\liminf_{n\rightarrow \infty}\f\|\f(\mathbf{S}_t(f_n+g_n)-\mathbf{S}_t(f_n)\g)^{(2)}\g\|_{B^s_{p,r}}\gtrsim t,  \quad \forall \;t\in[0,T].
\end{align*}
\end{theorem}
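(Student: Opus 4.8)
The plan is to follow the now-standard perturbative construction of Himonas--Misio{\l}ek for non-uniform dependence, adapted to the anisotropic Besov-space setting and to the presence of the Coriolis term. The building block is a two-scale ansatz: a high-frequency oscillation of small amplitude (the "$g_n$'' piece, which is $O(n^{-s})$-small in $B^s_{p,r}$ but order one in, say, $B^{s-1}_{p,r}$) superposed on a slowly varying profile (the "$f_n$'' piece), so that the nonlinear interaction $g_n\cdot\nabla g_n$ in \eqref{CE} instantaneously transports the high-frequency packet and produces an $O(t)$ discrepancy between the two solutions measured in $B^s_{p,r}$. Concretely I would take a high-frequency profile supported in a thin anisotropic box in frequency (so that the symbol $\Lambda(\xi)=\Omega\xi_3/|\xi|$ is essentially constant on the support, which is what tames the Coriolis dispersion — more on this below), of the schematic form
\begin{align*}
g_n(x)=n^{-s}\,\phi(x)\,\bigl(\cos(n x_1)\,\mathbf{e}_2-\text{(divergence-free correction)}\bigr),
\end{align*}
with $\phi\in C_c^\infty$, and let $f_n$ be a genuinely low-frequency (or moderately oscillating, $\lambda_n$-frequency with $\lambda_n\ll n$) divergence-free field with $\|f_n\|_{B^s_{p,r}}\to 0$; the superscripts $(1),(2),(3)$ in the statement refer to the three components of the vector field, and the oscillation is arranged to live in the second component, which is why only $(f_n)^{(2)}$ stays order one.

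The key steps, in order. First I would record the \emph{a priori} bounds: by Theorem~\ref{th1}, $\mathbf{S}_t(f_n+g_n)$ and $\mathbf{S}_t(f_n)$ exist on a common time interval $[0,T]$ (since the initial norms are uniformly bounded) with $\|\mathbf{S}_t(\cdot)\|_{L^\infty_T B^s_{p,r}}\lesssim 1$, and moreover, propagating regularity one notch down, $\|\mathbf{S}_t(g_n)\|_{B^{s-1}_{p,r}}\lesssim \|g_n\|_{B^{s-1}_{p,r}}\approx 1$ uniformly. Second, I would set $v_n:=\mathbf{S}_t(f_n+g_n)-\mathbf{S}_t(f_n)$ and write the equation it solves: $\partial_t v_n+\Omega e_3\times v_n+\nabla(P_1-P_2)=-\big(v_n\cdot\nabla\,\mathbf{S}_t(f_n+g_n)+\mathbf{S}_t(f_n)\cdot\nabla v_n\big)$. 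Third — the heart of the matter — I would show $v_n$ stays close, in the low norm $B^{s-1}_{p,r}$, to the solution $w_n$ of the \emph{linearized} Euler--Coriolis flow along $f_n$ with data $g_n$, and even close to the free (Coriolis + transport-by-$f_n$) evolution; the errors are $O(\|g_n\|_{B^{s-1}}\cdot\|g_n\|_{B^s})=o(1)$ plus terms controlled by Gr\"onwall. Fourth, I would compute the $B^s_{p,r}$-norm of the leading term of $v_n$ by hand. The point is that after time $t$ the transport operator $\mathrm{e}^{-t\, f_n\cdot\nabla}$ (plus the dispersive rotation, which is harmless on our narrow frequency box) applied to $g_n$ has moved the oscillation and, crucially, the self-interaction $-t\,(g_n\cdot\nabla g_n)$ contributes a term of size $\approx t\cdot n^{-2s}\cdot n = t\,n^{1-2s}$ at frequency $\approx n$ — but one rescales the amplitude so that this reads $\gtrsim t$ in $B^s$ after accounting for the $n^s$ weight; equivalently, one chooses amplitudes $\approx n^{-s}$ and a \emph{second}, lower frequency $\lambda_n$ for $f_n$ so that the beating term $g_n\cdot\nabla f_n + f_n\cdot\nabla g_n$ (order $t\,\lambda_n\, n^{-s}$ at frequency $n$) is the one producing the $O(t)$ lower bound while everything else is $o(1)$. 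Fifth, I would assemble: the reverse triangle inequality gives $\liminf_n \|(v_n)^{(2)}\|_{B^s_{p,r}} \ge \liminf_n \|(\text{leading term})^{(2)}\|_{B^s_{p,r}} - \limsup_n\|(\text{error})\|_{B^s_{p,r}} \gtrsim t$.

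The main obstacle is controlling the Coriolis contribution. Unlike the classical Euler case, one cannot simply discard $\Omega e_3\times u$: it is a zeroth-order but genuinely coupling term (it rotates components into one another) and it carries the nonlocal dispersion $\Lambda(\xi)=\Omega\xi_3/|\xi|$. The resolution I would pursue is to choose the high-frequency ansatz so that its Fourier support sits in a thin slab where $\xi_3/|\xi|$ varies by $o(1)$ — e.g., concentrate the $n$-frequency in the $x_1$-direction only, keeping $\xi_3$ bounded — so that on the relevant frequencies $\Omega e_3\times$ acts like a bounded skew-symmetric \emph{matrix} (not a pseudodifferential operator with a troublesome symbol), and in particular the free Euler--Coriolis semigroup restricted to such data is, up to $o(1)$ errors in $B^{s-1}_{p,r}$, just $\mathrm{Id}$ plus a bounded rotation that does not destroy the lower bound for the chosen component. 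A secondary technical point is that, working in $B^s_{p,r}$ with $r=\infty$ included, one has no continuity in time at the top regularity, so all "closeness'' arguments (steps three and four) must be carried out in the lower space $B^{s-1}_{p,r}$ (where Theorem~\ref{th1}'s estimates and the commutator/product laws still apply), and only the final lower bound is stated in $B^s_{p,r}$; this is exactly the Himonas--Misio{\l}ek mechanism and it survives here because the $O(t)$ lower-bound term is a single, explicit, frequency-localized mode whose $B^s$-norm one computes directly rather than through any continuity statement.
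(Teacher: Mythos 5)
Your overall strategy --- two-scale initial data, a perturbation analysis showing the flow is well approximated by its first-order Taylor expansion in time, an explicit lower bound on a cross term of the form $(\mathrm{low})\cd\na(\mathrm{high})$, and a reverse triangle inequality --- is exactly the paper's. But the construction as you describe it cannot work, because you have the roles of $f_n$ and $g_n$ reversed and your bookkeeping is inconsistent with the conclusion you need. You take $f_n$ low-frequency with $\|f_n\|_{B^s_{p,r}}\to 0$ and $g_n$ a high-frequency packet that you call ``$O(n^{-s})$-small in $B^s_{p,r}$''. If both data tend to zero in $B^s_{p,r}$, then the a priori bound \eqref{hx} gives $\|\mathbf{S}_t(f_n+g_n)-\mathbf{S}_t(f_n)\|_{B^s_{p,r}}\lesssim\|f_n+g_n\|_{B^s_{p,r}}+\|f_n\|_{B^s_{p,r}}\to 0$, so no lower bound $\gtrsim t$ is possible; if instead (as your formula $g_n=n^{-s}\phi(x)\cos(nx_1)\mathbf{e}_2+\dots$ actually gives) $\|g_n\|_{B^s_{p,r}}\approx n^{-s}\cdot n^{s}\approx 1$, then the two data do not converge to each other and nothing about uniform continuity follows. (Your norm claims are also internally wrong: a frequency-$n$ packet of amplitude $n^{-s}$ has $B^{s}_{p,r}$-norm $\approx 1$ and $B^{s-1}_{p,r}$-norm $\approx n^{-1}$, not the other way around.) The theorem statement itself dictates the correct assignment: the \emph{common} piece $f_n$ must be the high-frequency datum of unit $B^s_{p,r}$-size with the oscillation carried by the second component, and the \emph{perturbation} $g_n$ must be low-frequency and $o(1)$ in every norm. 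The paper takes $f_n=2^{-n(s+1)}(-\pa_2,\pa_1,0)[\theta(x)\cos(\tfrac{17}{12}2^nx_1)]$ and $g_n=2^{-n}(-\pa_2,\pa_1,0)\theta(x)$, and the $O(t)$ divergence is driven by $t\,g_n\cd\na f_n^{(2)}$, which is bounded below in $B^s_{p,\infty}$ because the gradient falls on the $2^n$-oscillation (Lemma \ref{ZZ}), while all other quadratic terms are $O(2^{-n})$ by Lemmas \ref{y1} and \ref{lem:P}.

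Two further remarks. First, your concern about the Coriolis dispersion is misplaced at this stage: the term $\Omega e_3\times u$ is linear, so its contribution to the difference of the two first-order Taylor expansions is exactly $t\,\Omega\mathcal{P}(e_3\times g_n)$, of size $\lesssim t\|g_n\|_{B^s_{p,r}}=O(t2^{-n})$ by boundedness of the Leray projector on $B^s_{p,r}$ for $1<p<\infty$; no localization of the symbol $\Omega\xi_3/|\xi|$ to a thin slab is needed. Second, the paper's quantitative engine is not a Gr\"onwall closeness estimate in $B^{s-1}_{p,r}$ followed by interpolation, but a direct second-order Taylor expansion in $B^s_{p,r}$ itself, namely $\|\mathbf{S}_t(u_0)-u_0+t\mathcal{P}(u_0\cd\na u_0+\Omega e_3\times u_0)\|_{B^s_{p,r}}\leq Ct^2\mathbf{E}(u_0)$ (Proposition \ref{pro1}), which is affordable because the frequency-localized data satisfy $\|u_0^n\|_{B^{s+k}_{p,r}}\lesssim 2^{kn}$ for $k\in\{0,\pm1,2\}$ and the resulting $t^2$-coefficient stays uniformly bounded. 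Your linearization route could in principle be made to work, but you would still need to supply the interpolation step converting $B^{s-1}_{p,r}$-closeness plus $B^{s+1}_{p,r}$-boundedness into a $B^s_{p,r}$ lower bound, and, before that, a consistent choice of amplitudes and frequencies.
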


\begin{remark}
We should mention that Theorem \ref{th2} is stronger than that for the Euler equations in \cite{HM1}. On the one hand, the non-uniform continuous is established in the more general framework of Besov spaces. On the other hand, Theorem \ref{th2} is focused on the second component of data-to-solution map which leads to the non-uniform continuous.
\end{remark}

Moreover, we obtain the following

\begin{theorem}\label{th3}
Assume that $(s,p,r)$ satisfying \eqref{eq:spr1}. For any $\alpha\in(0,1)$, there exists $u_0\in B^s_{p,r}(\R^3)$  such that the data-to-solution map $u_0\mapsto \mathbf{S}_{t}(u_0)\in \mathcal{C}([0,T];B^s_{p,r})$ of the Cauchy problem \eqref{CE} satisfies
\bbal
\limsup_{t\to0^+}\frac{\f\|\mathbf{S}_t(u_0)-u_0\g\|_{B^s_{p,r}}}{t^\alpha}=+\infty.
\end{align*}
\end{theorem}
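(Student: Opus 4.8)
The plan is to build a single divergence-free datum $u_0\in B^s_{p,r}(\R^3)$ by superposing, at a lacunary sequence of frequencies $\lambda_k:=2^{2^k}$, rescaled copies of the high-frequency building blocks already used in the proof of Theorem \ref{th2}, and to test the solution at the times $t_k:=c\,\lambda_k^{-1}$ for a fixed small $c>0$. Fix a smooth divergence-free background $V=\phi(x_2,x_3)\,e_1$ with $\phi\gtrsim1$ on a fixed ball $B$, and for each $k$ a divergence-free block $w_k$ with Fourier support in $\{|\xi|\sim\lambda_k\}$, polarized in $\mathrm{span}\{e_2,e_3\}$, oscillating like $e^{\pm i\lambda_k x_1}$ with envelope supported in $B$, normalized so that $\|w_k\|_{B^s_{p,r}}\approx\delta_k:=2^{-k}$ (so $w_k$ has pointwise size $\approx\delta_k\lambda_k^{-s}$). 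Since the $w_k$ sit in pairwise disjoint dyadic blocks, $\big\|\sum_kw_k\big\|_{B^s_{p,r}}\approx\|(\delta_k)_k\|_{\ell^r}<\infty$, so $u_0:=V+\sum_{k\ge1}w_k$ is a bona fide element of $B^s_{p,r}$ with $\mathrm{div}\,u_0=0$, and Theorem \ref{th1} gives a unique solution $u=\mathbf S_t(u_0)$ with $\|u\|_{L^\infty_TB^s_{p,r}}\lesssim1$.

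The heuristic is that on $[0,t_k]$ the frequency-$\lambda_k$ part of $u$ is governed by the transport of $w_k$ along the low-frequency flow $V$, so that, with $2^{j_k}\sim\lambda_k$, one has $\Delta_{j_k}(\mathbf S_{t_k}(u_0)-u_0)\approx-t_k\,\mathbb{P}(V\cdot\na w_k)\approx-i\,t_k\lambda_k\,\phi\,w_k$, whose $B^s_{p,r}$-norm is $\approx t_k\lambda_k\,\|w_k\|_{B^s_{p,r}}\approx c\,\delta_k$. Granting this,
$$\frac{\|\mathbf S_{t_k}(u_0)-u_0\|_{B^s_{p,r}}}{t_k^{\alpha}}\;\gtrsim\;\frac{c\,\delta_k}{(c\lambda_k^{-1})^{\alpha}}\;\gtrsim\;\delta_k\lambda_k^{\alpha}=2^{-k}\,2^{\alpha\,2^k}\longrightarrow+\infty,$$
which is the assertion of the theorem.

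Making the heuristic rigorous rests on three points. First, a spectrally localized a priori bound: in the Bony-type equation for $\|\Delta_{j_k}u\|_{L^p}$, lacunarity of the spectrum of $u$ forces every contribution to the block $j_k$ to be either linear in $\Delta_{j_k}u$ with coefficient $\lesssim\lambda_k$ (the paraproducts, since the low part $V+\sum_{l<k}w_l$ is $O(1)$ in $L^\infty$) or negligible (the remainder, because the self-interactions $w_l\cdot\na w_l$ produce only frequencies near $0$ and $2\lambda_l$), while the Coriolis term is $\lesssim\|\Delta_{j_k}u\|_{L^p}$; Gr\"onwall then yields $\|\Delta_{j_k}u(\tau)\|_{L^p}\lesssim e^{C\lambda_k\tau}\,\delta_k\lambda_k^{-s}\lesssim\delta_k\lambda_k^{-s}$ for $\tau\le t_k$ once $c$ is small. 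Second, using this one writes $\Delta_{j_k}(\mathbf S_{t_k}(u_0)-u_0)=-t_k\,\Delta_{j_k}\mathbb{P}(V\cdot\na w_k)+\mathrm{Err}_k$, where $\mathrm{Err}_k$ gathers the Coriolis contribution, the genuinely nonlinear terms $w_k\cdot\na w_k$, $w_k\cdot\na V$, $w_l\cdot\na w_k$ ($l\ne k$), and the correction from freezing $u(\tau)$ at $u_0$ on $[0,t_k]$; each of these is $\lesssim\delta_k\lambda_k^{-s}\cdot(c\lambda_kt_k)\cdot(\text{bounded})$ or $\lesssim\delta_k\lambda_k^{-s}(c\lambda_kt_k)^2$, hence $\le\tfrac12\,t_k\|\Delta_{j_k}\mathbb{P}(V\cdot\na w_k)\|_{L^p}$ for $c$ small. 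Third, a no-cancellation lower bound $\|\Delta_{j_k}\mathbb{P}(V\cdot\na w_k)\|_{L^p}\gtrsim\lambda_k\cdot\delta_k\lambda_k^{-s}$, which follows from $V\cdot\na w_k=\phi\,\pa_1w_k=i\lambda_k\phi\,w_k+(\text{lower order in }\lambda_k)$, from $\phi\gtrsim1$ on the support of $w_k$, and from the fact that $\mathbb{P}$ acts as the identity up to an $O(\lambda_k^{-1})$ error on a piece Fourier-concentrated near $\lambda_ke_1$ with polarization transverse to $e_1$. Combining the three gives $\|\Delta_{j_k}(\mathbf S_{t_k}(u_0)-u_0)\|_{L^p}\gtrsim c\,\delta_k\lambda_k^{-s}$, hence $\|\mathbf S_{t_k}(u_0)-u_0\|_{B^s_{p,r}}\gtrsim2^{j_ks}\|\Delta_{j_k}(\mathbf S_{t_k}(u_0)-u_0)\|_{L^p}\gtrsim c\,\delta_k$, exactly as needed in the display.

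The main obstacle is the second and third points: making the approximation $\mathbf S_\tau(u_0)\approx V+\sum_kw_k$ quantitative on $[0,t_k]$ \emph{uniformly in the superposition}, and establishing the no-cancellation estimate — this is precisely the technical core already developed for Theorem \ref{th2}, so in practice one recycles its building blocks and error analysis rather than redoing them. Two bookkeeping remarks: the Gr\"onwall constant must be kept $O(1)$, which pins the test time at $t_k\sim\lambda_k^{-1}$ (and, since $\alpha<1$, this is also the best choice, a smaller $t_k$ only weakening the ratio $t_k^{1-\alpha}\lambda_k\delta_k$); and the slow $\ell^r$-summability of $(\delta_k)$ is perfectly compatible with the double-exponential growth of $\lambda_k$, since $\delta_k\lambda_k^{\alpha}=2^{-k}\,2^{\alpha\,2^k}\to\infty$ for every $\alpha\in(0,1)$.
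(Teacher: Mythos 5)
Your overall architecture coincides with the paper's: build one divergence-free datum as a superposition of high-frequency oscillating blocks, approximate $\mathbf{S}_t(u_0)-u_0$ by $-t\mathbf{v}_0(u_0)$, detect the transport term $V\cdot\na w_k\approx i\lambda_k\phi\,w_k$ on a single Littlewood--Paley block, and let a sequence of times $t_k\to0$ beat $t_k^{\alpha}$. The paper uses \emph{all} dyadic frequencies $2^n$ with block amplitudes $n^{-2}$ in $B^s_{p,r}$, lets the datum's own low-frequency part play the role of your background $V$ (it is bounded below near the origin), and obtains the lower bound $2^{ns}\|u_0\cdot\na\De_{n}u_0^{(2)}\|_{L^p}\gtrsim n^{-2}2^{n}$ after commuting $\De_n$ past $u_0\cdot\na$; your lacunary $\lambda_k=2^{2^k}$, the separate background $V$, and the amplitudes $\delta_k=2^{-k}$ are variants of the same construction. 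So the route is essentially the paper's, not a different one.

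There is, however, one genuine gap, located in your error analysis at the time scale $t_k\sim\lambda_k^{-1}$. You claim $\mathrm{Err}_k\lesssim\delta_k\lambda_k^{-s}(c\lambda_kt_k)^2$, i.e.\ that the quadratic-in-time Taylor error at block $j_k$ is proportional to the block amplitude $\delta_k$, and you propose to obtain this by recycling the error analysis of Theorem~\ref{th2}. But that analysis (Propositions~\ref{pro1} and \ref{pro2}) only yields the \emph{global} bound $\|\mathbf{S}_{t}(u_0)-u_0+t\mathbf{v}_0(u_0)\|_{B^{s-2}_{p,r}}\lesssim t^2$ with a constant depending on $\|u_0\|_{B^s_{p,r}}\approx1$; at block $j_k$ this gives an error of size $\lambda_k^{2}t_k^{2}=c^2$ in $B^s_{p,r}$, which is \emph{not} $o(\delta_k)$ and swamps your main term $c\,\delta_k=c\,2^{-k}$. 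To rescue $t_k\sim\lambda_k^{-1}$ you genuinely need the block-localized Gr\"onwall bound of your ``first point,'' and that is delicate because for $\tau>0$ the spectrum of $u(\tau)$ is no longer lacunary: the commutator source feeding block $j_k$ is only $\ell^r$-summable in $j$, not pointwise $\lesssim\delta_k$, and it is harmless here only because $t_k=c\,2^{-2^k}\ll 2^{-k}=\delta_k$ --- a point you must actually verify rather than assert. The clean fix, which is what the paper does (with $t_n^{1-\alpha}=n^{3}2^{-n}$, far below the inverse frequency), is to take $t_k\lesssim\delta_k\lambda_k^{-1}$ so that the crude error $\lambda_k^2t_k^2$ is dominated by the main term $t_k\lambda_k\delta_k$; the ratio is then still $\gtrsim\delta_k^{2-\alpha}\lambda_k^{\alpha}=2^{-k(2-\alpha)}2^{\alpha 2^k}\to\infty$. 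Your closing remark that a smaller $t_k$ ``only weakens the ratio'' dismisses precisely the choice that makes the argument close without the unproved localized estimate.
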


\begin{remark}
We would like to mention that Theorem \ref{th3} is new. In fact, Theorem \ref{th1} tells us that the solution $\mathbf{S}_{t}(u_0)$  for \eqref{CE} is continuous in time
in Besov spaces $B^s_{p,r}$ with $r<\infty$ while Theorem \ref{th3} furthermore indicates that the solution $\mathbf{S}_{t}(u_0)$  for \eqref{CE} cannot be H\"{o}lder continuous in time in the same Besov spaces $B^s_{p,r}$.
\end{remark}

Finally, as a by-product, we have
\begin{theorem}\label{th4}
Assume that $(s,p,r=\infty)$ satisfying \eqref{eq:spr1}. There exists $u_0\in B^s_{p,\infty}(\R^3)$  such that the data-to-solution map $u_0\mapsto \mathbf{S}_{t}(u_0)$ of the Cauchy problem \eqref{CE} satisfies
\bbal
\limsup_{t\to0^+}\f\|\mathbf{S}_t(u_0)-u_0\g\|_{B^s_{p,\infty}}\geq \ep_0>0.
\end{align*}
\end{theorem}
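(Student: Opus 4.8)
The plan is to reuse the initial data and the low-frequency analysis that drives Theorem \ref{th3}, but now exploit the freedom $r=\infty$ to keep a single high-frequency block of size $\sim 1$. Concretely, I would take an initial datum of the form $u_0 = \sum_{j} 2^{-js}\phi(2^{-j}\cdot)\vv$ concentrated on a lacunary set of frequencies (or, even more simply, a single profile $u_0=\lambda^{-s}\Delta_\lambda$-localized bump times a divergence-free constant vector such as $\vv=(1,0,0)$ with a phase in the $e_3$-direction so that the Coriolis term is genuinely active). Such a $u_0$ sits in $B^s_{p,\infty}$ with norm $\approx 1$ but not in $B^s_{p,r}$ for any finite $r$; this is exactly the room that produces a non-decaying contribution at $t=0^+$. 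By Theorem \ref{th1}, $u=\mathbf S_t(u_0)$ exists on some $[0,T]$ with $\|u\|_{L^\infty_T B^s_{p,r}}\lesssim 1$, so all error terms below can be controlled uniformly on $[0,T]$.

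The main step is to extract the leading-order behaviour of $\mathbf S_t(u_0)-u_0$ for small $t$. Writing the Duhamel/mild formulation of \eqref{CE},
\[
\mathbf S_t(u_0)-u_0 = -\int_0^t \Big(\Omega\, e_3\times u + \mathbb P(u\cdot\nabla u)\Big)\,\dd\tau,
\]
where $\mathbb P$ is the Leray projector, I would show that the quadratic term $\int_0^t\mathbb P(u\cdot\nabla u)\,\dd\tau$ and the commutator-type remainders are $o(1)$ in $B^s_{p,\infty}$ as $t\to0^+$ (they are in fact $O(t)$ by the a priori bound), while the linear Coriolis term contributes a piece whose $B^s_{p,\infty}$ norm stays bounded below: $\|\int_0^t \Omega e_3\times u\,\dd\tau\|_{B^s_{p,\infty}}\approx \Omega\, t\,\|\,\text{(high-frequency part of }u_0)\,\|_{B^s_{p,\infty}}$ — but this vanishes as $t\to0$. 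So a purely linear-in-$t$ argument is not enough; instead I would iterate once more, i.e. replace $u$ inside the Coriolis integral by $u_0$ plus a remainder, and observe that it is the \emph{interaction} of the Coriolis semigroup with the high-frequency profile that fails to converge. The cleanest route: compare $\mathbf S_t(u_0)$ with the solution $v$ of the \emph{linear} rotating system (the dispersive equation with symbol $\Lambda(\xi)=\Omega\xi_3/|\xi|$) started from $u_0$; standard energy estimates give $\|\mathbf S_t(u_0)-v(t)\|_{B^s_{p,\infty}}=O(t)$, and then one is reduced to showing $\limsup_{t\to0^+}\|e^{-t\mathcal L}u_0-u_0\|_{B^s_{p,\infty}}\geq\ep_0$, where $e^{-t\mathcal L}$ is the rotation semigroup. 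This last statement is a clean harmonic-analysis fact: for $u_0$ with a frequency-localized piece of unit $B^s_{p,\infty}$ size at scale $2^j\to\infty$, the phase $e^{-it\Omega\xi_3/|\xi|}$ oscillates by $O(1)$ on the support (since $|\xi|\sim 2^j$ but $\xi_3/|\xi|$ ranges over an $O(1)$ set when the bump is chosen with angular spread), so $\|\Delta_j(e^{-t\mathcal L}u_0-u_0)\|_{L^p}\gtrsim 2^{-js}$ for every $j$ with $2^j t\gtrsim$ (a suitable lower threshold depending only on the profile), which is compatible with $t\to0^+$ by sending $j\to\infty$ along the lacunary set. Hence $\|e^{-t\mathcal L}u_0-u_0\|_{B^s_{p,\infty}}\gtrsim 1$ for all small $t>0$.

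I expect the main obstacle to be organizing the frequency bookkeeping so that the oscillation lower bound survives: one must choose the angular localization of each bump $\phi_j$ (support in $\xi$ roughly a sector $\{|\xi|\sim 2^j,\ \xi_3/|\xi|\in[a,b]\}$ with $b-a\sim 1$) so that for each small $t$ there is \emph{some} $j$ with $\Delta_j\big(e^{-t\mathcal L}u_0-u_0\big)$ of size $\gtrsim 2^{-js}$ in $L^p$, uniformly in $t$; this forces a careful pairing between the scale $j$ and the time $t$ (essentially $2^{-j}\lesssim$ nothing, i.e. all large $j$ work simultaneously for fixed $t$, which is what $r=\infty$ buys us). A secondary technical point is checking that the Leray projector and the nonlinearity do not conspire to cancel the Coriolis-driven oscillation — handled by the $O(t)$ remainder estimate above, which only uses \eqref{hx} and product/commutator estimates in $B^s_{p,r}$ already implicit in Theorem \ref{th1}. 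Finally, one records that $u_0\in B^s_{p,\infty}$ (indeed with norm $\approx 1$) while $u_0\notin\bigcup_{r<\infty}B^s_{p,r}$, which is why the discontinuity at $t=0$ is genuinely an $r=\infty$ phenomenon, consistent with Theorem \ref{th1}.
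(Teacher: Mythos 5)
Your reduction to the linear rotating system is the fatal step. The dispersion relation of the Coriolis term is $\Lambda(\xi)=\Omega\xi_3/|\xi|$, which is homogeneous of degree \emph{zero} and hence bounded: $|\Lambda(\xi)|\leq|\Omega|$ for all $\xi$. The phase $t\Omega\xi_3/|\xi|$ is therefore $O(t)$ uniformly over every frequency block, with no amplification as $2^j\to\infty$; the generator is a bounded (order-zero) operator on $B^s_{p,\infty}$, so $\|e^{-t\mathcal L}u_0-u_0\|_{B^s_{p,\infty}}\leq C|\Omega|\,t\,\|u_0\|_{B^s_{p,\infty}}\to0$ as $t\to0^+$. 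Your claimed lower bound $\limsup_{t\to0^+}\|e^{-t\mathcal L}u_0-u_0\|_{B^s_{p,\infty}}\geq\ep_0$ is simply false, for any choice of angular localization of the bumps. The companion claim $\|\mathbf S_t(u_0)-v(t)\|_{B^s_{p,\infty}}=O(t)$ is also not available: it would require $\|u\cdot\nabla u\|_{B^s_{p,\infty}}\lesssim1$ uniformly, but the product estimate loses a derivative ($\|u\cdot\nabla u\|_{B^s}$ is controlled via $\|u\|_{B^{s+1}}$, cf.\ \eqref{cj}), and for the lacunary data relevant here one has $2^{n(s+1)}\|\Delta_n u_0\|_{L^p}\approx2^n\to\infty$, i.e.\ $u_0\notin B^{s+1}_{p,\infty}$. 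So neither half of your reduction holds, and in fact the discontinuity comes from exactly the term you discard as a remainder.

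The paper's mechanism is nonlinear, not dispersive. With $u_0=(-\pa_2,\pa_1,0)\sum_{n\geq3}2^{-ns}\theta(x)\cos\big(\tfrac{17}{12}2^nx_1\big)$ (so $\|u_0\|_{B^s_{p,\infty}}\approx1$), one writes $\mathbf S_t(u_0)-u_0=-t\mathbf v_0(u_0)+\mathbf w$ with $\mathbf v_0(u_0)=\mathcal P(u_0\cd\na u_0+\Omega e_3\times u_0)$ and shows $\|\mathbf w\|_{B^{s-2}_{p,\infty}}\lesssim t^2$ (Proposition \ref{pro2}); the Coriolis and $\mathcal Q$ contributions to $\mathbf v_0$ are $O(1)$ in $B^s_{p,\infty}$, while the convection term satisfies $2^{ns}\|u_0\cd\na\De_nu_0^{(2)}\|_{L^p}\gtrsim2^n$ after controlling the commutator $[\De_n,u_0]\cd\na u_0^{(2)}$. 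Hence
\begin{align*}
\f\|\f(\mathbf S_t(u_0)-u_0\g)^{(2)}\g\|_{B^s_{p,\infty}}\geq ct\,2^n-Ct-C2^{2n}t^2,
\end{align*}
and choosing $t_n\approx\ep\,2^{-n}\to0$ yields the uniform lower bound $\ep_0$. The loss of one derivative in $u_0\cd\na u_0$ (a factor $2^n$ at frequency $2^n$), balanced against $t\approx2^{-n}$, is what produces a contribution of size $O(1)$ at arbitrarily small times; your proposal would need to be rebuilt around this quadratic term rather than the rotation semigroup.
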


\begin{remark}
Theorem \ref{th4} implies the ill-posedness of \eqref{CE} in the weaker Besov spaces $B^s_{p,\infty}(\R^3)$ in the sense that the solution map to this system starting from $u_0$ is discontinuous at $t = 0$ in the metric of $B^s_{p,\infty}$.
\end{remark}

\begin{remark}
As mentioned above, \eqref{CE} with $\Omega=0$ is the classical Euler equations. Thus our main Theorems \ref{th1}-\ref{th4} hold true for the classical Euler equations.
\end{remark}

\subsection{Strategies to the proof}

We comment on a few points of main ideas and difficulties.

{\bf Proof of Theorem \ref{th1}.} In the case when $p\in(1,\infty)$, since the Coriolis force has no impact on the well-posedness results, we can prove Theorem \ref{th1} by following the proof of the local well-posedness results for the Euler equations (see \cite[Theorem 7.1 ]{B} and \cite[Theorem 1.1 ]{GLY}).

{\bf Proof of Theorem \ref{th2}.} The main task is to construct initial
data such that the second component of the initial velocity is uniformly bounded in $B^s_{p,r}$, and in the same time to have the other component of the initial velocity remain small. We first choose two sequences of initial values $u^n_{0,1}=f_n+g_n$ and $u^n_{0,2}=f_n$, which can generate the approximate solutions $\mathbf{S}_t(u^n_{0,1})$ and $\mathbf{S}_t(u^n_{0,2})$, respectively. Next, we perform perturbation analysis (See Proposition \ref{pro1}) to show the second component of the  difference between the data-to-solution maps $\mathbf{S}_t(u^n_{0,1})$ and $\mathbf{S}_t(u^n_{0,2})$ is bounded below by a positive constant at any later time,  which means the solution map is not uniformly continuous.

{\bf Proof of Theorem \ref{th3}.}  In order to prove the failure of H\"{o}lder regularity of solutions for the \eqref{CE}, we would like to first find initial condition to generate the corresponding solutions to \eqref{CE} satisfying that
\bbal
\frac{\f\|\mathbf{S}_t(u_0)-u_0\g\|_{B^s_{p,r}}}{t^\alpha}\geq C(t)\to+\infty\quad\text{as} \quad t\to 0^+.
\end{align*}
Our key idea is to decompose the  difference between the data-to-solution map $\mathbf{S}_t(u_0)$ and initial data $u_0$ as
\bbal
&\mathbf{S}_{t}(u_0)-u_0=\mathbf{S}_{t}(u_0)-u_0+t\mathbf{v}_0(u_0)-t\mathbf{v}_0(u_0),
\end{align*}
here and in what follows we denote $\mathbf{v}_0(u_0):=\mathcal{P}(u_0\cd\na u_0+\Omega e_3\times u_0)$.

There remains two main difficulties that we need to overcome
\begin{itemize}
\item First, we perform perturbation analysis to prove that $\mathbf{S}_{t}(u_0)-u_0+t\mathbf{v}_0(u_0)$ is small;
\item Second, we need to ensure that the convect term $u_0\cd\na u_0$ is large.
\end{itemize}
To carry out the analysis for these above two points, a big part of the technical difficulties lies in the construction of our initial data. We construct a new initial data which is somewhat different from Theorem \ref{th2}. Finally, thanks to commutator estimates and some basic analysis, we obtain the loss of H\"{o}lder regularity of solutions to \eqref{CE}.

{\bf Proof of Theorem \ref{th4}.} The main point is to modify the initial data constructed in Theorem \ref{th3}. Following the same procedure as that in Theorem \ref{th3}, we can prove Theorem \ref{th4}.

\subsection{Organization of our paper}

The paper is divided as follows.
\begin{itemize}
  \item In Section \ref{sec2}, we list some notations and known results which will be used in the sequel.
  \item In Section \ref{sec3}, we show the non-uniform continuity of the solution map (Theorem \ref{th2}).
  \item In Section \ref{sec4}, we prove the failure of H\"{o}lder regularity of the solution map (Theorem \ref{th3}).
  \item In Section \ref{sec5}, we establish the discontinuity at zero time of the solution map (Theorem \ref{th4}).
\end{itemize}

\section{Preliminaries}\label{sec2}
We will use the following notations throughout this paper.
\begin{itemize}
  \item For $X$ a Banach space and $I\subset\R$, we denote by $\mathcal{C}(I;X)$ the set of continuous functions on $I$ with values in $X$. Sometimes we will denote $L^p(0,T;X)$ by $L_T^pX$.
  \item The symbol $\mathrm{A}\lesssim (\gtrsim)\mathrm{B}$ means that there is a uniform positive ``harmless" constant $\mathrm{C}$ independent of $\mathrm{A}$ and $\mathrm{B}$ such that $\mathrm{A}\leq(\geq) \mathrm{C}\mathrm{B}$, and we sometimes use the notation $\mathrm{A}\approx \mathrm{B}$ means that $\mathrm{A}\lesssim \mathrm{B}$ and $\mathrm{B}\lesssim \mathrm{A}$.
  \item Let us recall that for all $u\in \mathcal{S}'$, the Fourier transform $\mathcal{F}u$, also denoted by $\widehat{u}$, is defined by
$$
\mathcal{F}u(\xi)=\widehat{u}(\xi)=\int_{\R^3}e^{-\mathrm{i}x\cd \xi}u(x)\dd x \quad\text{for any}\; \xi\in\R^3.
$$
  \item The inverse Fourier transform allows us to recover $u$ from $\widehat{u}$:
$$
u(x)=\mathcal{F}^{-1}\widehat{u}(x)=(2\pi)^{-3}\int_{\R^3}e^{\mathrm{i}x\cdot\xi}\widehat{u}(\xi)\dd\xi.
$$
 \item We denote the Leray projection
\bbal
&\mathcal{P}: L^{p}(\mathbb{R}^{3}) \rightarrow L_{\sigma}^{p}(\mathbb{R}^{3}) \equiv \overline{\left\{f \in \mathcal{C}^\infty_{0}(\mathbb{R}^{d}) ; {\rm{div}} f=0\right\}}^{\|\cdot\|_{L^{p}(\mathbb{R}^{3})}},\quad p\in(1,\infty),\\
&\mathcal{Q}=\mathrm{Id}-\mathcal{P}.
\end{align*}
In $\mathbb{R}^{d}$, $\mathcal{P}$ can be defined by $\mathcal{P}= \mathrm{Id}+(-\Delta)^{-1}\nabla {\rm{div}}$, or equivalently, $\mathcal{P}=(\mathcal{P}_{i j})_{1 \leqslant i, j \leqslant 3}$, where $\mathcal{P}_{i j} \equiv \delta_{i j}+R_{i} R_{j}$ with $\delta_{i j}$ being the Kronecker delta ($\delta_{i j}=0$ for $i\neq j$ and $\delta_{i i}=0$) and $R_{i}$ being the Riesz transform with symbol $-\mathrm{i}\xi_i/|\xi|$. Obviously, $\mathcal{Q}= -(-\Delta)^{-1}\nabla {\rm{div}}$, and if $\div\, u=\div\, v=0$, it holds that
$
\mathcal{Q}(u\cdot\na v)= \mathcal{Q}(v\cdot\na u).
$
\end{itemize}
Next, we will recall some facts about the Littlewood-Paley decomposition and the nonhomogeneous Besov spaces (see \cite{B} for more details).
Choose a radial, non-negative, smooth function $\vartheta:\R^3\mapsto [0,1]$ such that
 ${\rm{supp}} \;\vartheta\subset B(0, 4/3)$ and $\vartheta(\xi)\equiv1$ for $|\xi|\leq3/4$.
Setting $\varphi(\xi):=\vartheta(\xi/2)-\vartheta(\xi)$, then we deduce that $\varphi$ has the following properties
\begin{itemize}
  \item ${\rm{supp}} \;\varphi\subset \left\{\xi\in \R^3: 3/4\leq|\xi|\leq8/3\right\}$;
  \item $\varphi(\xi)\equiv 1$ for $4/3\leq |\xi|\leq 3/2$;
  \item $\vartheta(\xi)+\sum_{j\geq0}\varphi(2^{-j}\xi)=1$ for any $\xi\in \R^3$;
  \item $\sum_{j\in \mathbb{Z}}\varphi(2^{-j}\xi)=1$ for any $\xi\in \R^3\setminus\{0\}$.
\end{itemize}
The nonhomogeneous and homogeneous dyadic blocks are defined as follows
\begin{align*}
\forall\, u\in \mathcal{S'}(\R^3),\quad \Delta_ju=0,\; \text{if}\; j\leq-2;\quad
\Delta_{-1}u=\vartheta(D)u;\quad
\Delta_ju=\varphi(2^{-j}D)u,\; \; \text{if}\;j\geq0,
\end{align*}
and
\begin{align*}
\forall\, u\in \mathcal{S}'_h(\R^3),\quad
\dot{\Delta}_ju=\varphi(2^{-j}D)u,\; \; \text{if}\;j\in \mathbb{Z},
\end{align*}
where the pseudo-differential operator is defined by $\sigma(D):u\to\mathcal{F}^{-1}(\sigma \mathcal{F}u)$ and $\mathcal{S}'_h$ is given by
\begin{eqnarray*}
\mathcal{S}'_h:=\Big\{u \in \mathcal{S'}(\mathbb{R}^{3}):\; \lim_{j\rightarrow-\infty}\|\vartheta(2^{-j}D)u\|_{L^{\infty}}=0 \Big\}.
\end{eqnarray*}
We recall the definition of the Besov spaces and norms.
\begin{definition}[Nonhomogeneous Besov spaces, see \cite{B}]
Let $s\in\mathbb{R}$ and $(p,r)\in[1, \infty]^2$. We define the nonhomogeneous Besov spaces
$$
B^{s}_{p,r}:=\f\{f\in \mathcal{S}':\;\|f\|_{B^{s}_{p,r}}:=\left\|2^{js}\|\Delta_jf\|_{L_x^p}\right\|_{\ell^r(j\geq-1)}<\infty\g\}.
$$
\end{definition}
\begin{definition}[Homogeneous Besov spaces, see \cite{B}]
Let $s\in\mathbb{R}$ and $(p,r)\in[1, \infty]^2$. We define the homogeneous Besov spaces
$$
\dot{B}^{s}_{p,r}:=\f\{f\in \mathcal{S}'_h:\;\|f\|_{\dot{B}^{s}_{p,r}}:=\left\|2^{js}\|\dot{\Delta}_jf\|_{L_x^p}\right\|_{\ell^r(j\in \mathbb{Z})}<\infty\g\}.
$$
\end{definition}
We remark that, for any $s>0$ and $(p,r)\in[1, \infty]^2$, then $B^{s}_{p,r}(\R^3)=\dot{B}^{s}_{p,r}(\R^3)\cap L^p(\R^3)$ and
$$\|f\|_{B^{s}_{p,r}}\approx \|f\|_{L^{p}}+\|f\|_{\dot{B}^{s}_{p,r}}.$$

The following Bernstein's inequalities will be used in the sequel.
\begin{lemma}[\cite{B}] \label{lem2.1} Let $\mathcal{B}$ be a ball and $\mathcal{C}$ be an annulus. There exists a constant $C>0$ such that for all $k\in \mathbb{N}\cup \{0\}$, any $\lambda\in \R^+$ and any function $f\in L^p$ with $1\leq p \leq q \leq \infty$, we have
\begin{align*}
&{\rm{supp}}\ \widehat{f}\subset \lambda \mathcal{B}\;\Rightarrow\; \|D^kf\|_{L^q}\leq C^{k+1}\lambda^{k+(\frac{1}{p}-\frac{1}{q})}\|f\|_{L^p},  \\
&{\rm{supp}}\ \widehat{f}\subset \lambda \mathcal{C}\;\Rightarrow\; C^{-k-1}\lambda^k\|f\|_{L^p} \leq \|D^kf\|_{L^p} \leq C^{k+1}\lambda^k\|f\|_{L^p}.
\end{align*}
\end{lemma}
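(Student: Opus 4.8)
The plan is to reduce both assertions to a single mechanism: a spectral support condition lets one realize each $k$-th order derivative as convolution against a \emph{rescaled fixed kernel}, after which Young's convolution inequality produces the power of $\lambda$ and the $L^p\to L^q$ gain. Throughout I write $D^kf$ for the family $\{\partial^\alpha f:|\alpha|=k\}$ and $\|D^kf\|_{L^q}:=\sup_{|\alpha|=k}\|\partial^\alpha f\|_{L^q}$, and I fix once and for all two $\lambda$- and $k$-independent cut-offs: $\phi\in\mathcal{C}_c^\infty(\R^3)$ with $\phi\equiv1$ on a neighbourhood of $\mathcal{B}$, and $\chi\in\mathcal{C}_c^\infty(\R^3\setminus\{0\})$ with $\chi\equiv1$ on a neighbourhood of $\mathcal{C}$.

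For the ball estimate, since $\mathrm{supp}\,\widehat f\subset\lambda\mathcal{B}$ one has $\widehat f=\phi(\cdot/\lambda)\widehat f$, so for $|\alpha|=k$,
\[
\partial^\alpha f=g^\alpha_\lambda*f,\qquad \widehat{g^\alpha_\lambda}(\xi)=(\mathrm{i}\xi)^\alpha\phi(\xi/\lambda).
\]
The substitution $\xi=\lambda\eta$ gives $g^\alpha_\lambda(x)=\lambda^{k+3}G^\alpha(\lambda x)$ with $G^\alpha=\mathcal{F}^{-1}((\mathrm{i}\xi)^\alpha\phi)$ a fixed Schwartz function, whence $\|g^\alpha_\lambda\|_{L^r}=\lambda^{k+3(1-1/r)}\|G^\alpha\|_{L^r}$. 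Taking $r$ with $1-1/r=1/p-1/q$ and applying $\|g^\alpha_\lambda*f\|_{L^q}\le\|g^\alpha_\lambda\|_{L^r}\|f\|_{L^p}$ yields the first inequality, the power of $\lambda$ being $k+d(1/p-1/q)$ with $d=3$ (the exponent recorded in \cite{B}). The upper bound of the annulus estimate is the special case $q=p$, $r=1$, with $\phi$ replaced by $\chi$.

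The only genuinely new point is the lower bound of the annulus estimate, where I would use that $\widehat f$ is supported away from the origin so that negative powers of $|\xi|$ are harmless. Starting from the multinomial identity $|\xi|^{2k}=\sum_{|\alpha|=k}\binom{k}{\alpha}\xi^{2\alpha}$ and $\chi(\cdot/\lambda)\widehat f=\widehat f$, I can write
\[
\widehat f(\xi)=\sum_{|\alpha|=k}\binom{k}{\alpha}\,\frac{\xi^\alpha\chi(\xi/\lambda)}{|\xi|^{2k}}\;\xi^\alpha\widehat f(\xi),
\]
which inverts to $f=\sum_{|\alpha|=k}\binom{k}{\alpha}\,c_\alpha\,K^\alpha_\lambda*\partial^\alpha f$ with $\widehat{K^\alpha_\lambda}(\xi)=\xi^\alpha\chi(\xi/\lambda)/|\xi|^{2k}$ and $|c_\alpha|=1$. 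Because $\chi$ lives in an annulus bounded away from $0$, the symbol $m_\alpha(\eta)=\eta^\alpha\chi(\eta)/|\eta|^{2k}$ is smooth and compactly supported, and scaling gives $K^\alpha_\lambda(x)=\lambda^{3-k}(\mathcal{F}^{-1}m_\alpha)(\lambda x)$, so $\|K^\alpha_\lambda\|_{L^1}=\lambda^{-k}\|\mathcal{F}^{-1}m_\alpha\|_{L^1}$. Young's inequality with $L^1$ kernels then gives $\|f\|_{L^p}\le\lambda^{-k}\big(\sum_{|\alpha|=k}\binom{k}{\alpha}\|\mathcal{F}^{-1}m_\alpha\|_{L^1}\big)\,\|D^kf\|_{L^p}$, which is the claimed lower bound once the parenthesis is absorbed into a constant.

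The main obstacle is not this structural part but the \emph{uniformity of the constants in $k$}: to land on the geometric factor $C^{k+1}$ rather than an uncontrolled dependence, one must bound the $L^1$ norms $\|\mathcal{F}^{-1}((\mathrm{i}\xi)^\alpha\phi)\|_{L^1}$ and $\|\mathcal{F}^{-1}(\xi^\alpha\chi/|\xi|^{2k})\|_{L^1}$ by $C^{k}$, uniformly in the finitely many admissible $\alpha$, and then sum against the multinomial weights whose total is $\sum_{|\alpha|=k}\binom{k}{\alpha}=3^k$. I would do this by the standard device $\|\mathcal{F}^{-1}\sigma\|_{L^1}\le C\,\|(1+|x|^2)^{-2}\|_{L^1}\,\sup_{|\beta|\le4}\|\partial_\xi^\beta\sigma\|_{L^\infty}$, valid since $(1+|x|^2)^{-2}\in L^1(\R^3)$: on the fixed compact support one has $|\xi^\alpha|\le R^k$ and $|\xi|^{-2k}\le r_0^{-2k}$, and differentiating at most four times costs only polynomial-in-$k$ factors, so each $L^1$ norm grows at most geometrically. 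This bookkeeping is precisely what is carried out in \cite[Lemma 2.1]{B}, to which the present statement defers.
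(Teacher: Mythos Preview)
Your argument is correct and is exactly the standard proof from \cite[Lemma~2.1]{B}; the paper itself provides no proof of this lemma and simply cites that reference, so there is nothing further to compare. One minor remark: your scaling computation correctly produces the exponent $k+3(1/p-1/q)$, which is the form recorded in \cite{B}; the exponent $k+(1/p-1/q)$ printed in the statement here is a typographical slip (the dimension factor $d=3$ is missing).
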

Next we recall the following product law which will be used often in the sequel.
\begin{lemma}[\cite{B}]\label{lp}
Assume $(s,p,r)$ satisfies \eqref{eq:spr1} and $\sigma>0$. Then
 there exists a constant $C$, depending only on $d,p,r,\sigma$ or $s$ such that
$$\|fg\|_{B^{\sigma}_{p,r}}\leq C\f(\|f\|_{L^\infty}\|g\|_{B^\sigma_{p,r}}+\|g\|_{L^\infty}\|f\|_{B^{\sigma}_{p,r}}\g),\quad \forall f,g\in L^\infty\cap B^{\sigma}_{p,r}.$$
Furthermore, due to the embedding $B^{s-1}_{p,r}(\R^3)\hookrightarrow L^{\infty}(\R^3)$, there holds
$$\|f\cdot \na g\|_{B^{s-1}_{p,r}}\leq C\|f\|_{B^{s-1}_{p,r}}\|g\|_{B^s_{p,r}},\quad \forall(f,g)\in B^{s-1}_{p,r}\times B^s_{p,r}$$
and
\begin{align}\label{cj}
\|f\cdot \na g\|_{B^{s}_{p,r}}\leq C\f(\|f\|_{B^{s-1}_{p,r}}\|g\|_{B^{s+1}_{p,r}}+\|f\|_{B^{s}_{p,r}}\|g\|_{B^{s}_{p,r}}\g),\quad \forall(f,g)\in B^{s}_{p,r}\times B^{s+1}_{p,r}.
\end{align}
\end{lemma}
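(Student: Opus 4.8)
The plan is to derive all three inequalities from Bony's paraproduct decomposition, which I would set up first. For $f,g\in\mathcal{S}'(\R^3)$ write
$$fg=T_fg+T_gf+R(f,g),\quad T_fg=\sum_j S_{j-1}f\,\Delta_jg,\quad R(f,g)=\sum_j\sum_{|\nu|\le1}\Delta_jf\,\Delta_{j+\nu}g,$$
where $S_{j-1}=\sum_{k\le j-2}\Delta_k$ denotes the low-frequency cut-off. The idea is to bound the two paraproducts and the remainder separately, exploiting the spectral localization of each dyadic block together with Bernstein's inequality (Lemma \ref{lem2.1}) and Young's inequality for $\ell^r$ sequences. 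Once the building-block estimates for $T$ and $R$ are in hand, the three claims follow by choosing the regularity indices appropriately and invoking the embedding $B^{s-1}_{p,r}(\R^3)\hookrightarrow L^\infty(\R^3)$, which holds under \eqref{eq:spr1} since then $s-1>3/p$, or $s-1=3/p$ with $r=1$.

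For the first (Moser-type) estimate I would treat $T_fg$ and $T_gf$ identically: because $\Delta_j(S_{j'-1}f\,\Delta_{j'}g)$ vanishes unless $|j-j'|\le N$ for a fixed $N$, Bernstein gives $\|\Delta_j(T_fg)\|_{L^p}\les\|f\|_{L^\infty}\sum_{|j'-j|\le N}\|\Delta_{j'}g\|_{L^p}$, so that multiplying by $2^{j\sigma}$ and taking the $\ell^r$ norm yields $\|T_fg\|_{B^\sigma_{p,r}}\les\|f\|_{L^\infty}\|g\|_{B^\sigma_{p,r}}$, valid for every $\sigma\in\R$, and symmetrically for $T_gf$. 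For the remainder each block $\Delta_jf\,\Delta_{j+\nu}g$ is spectrally supported in a ball of radius $\sim2^j$, so $\Delta_kR(f,g)=0$ unless $j\ge k-N$; hence $2^{k\sigma}\|\Delta_kR(f,g)\|_{L^p}\les\|f\|_{L^\infty}\sum_{j\ge k-N}2^{(k-j)\sigma}2^{j\sigma}\|\Delta_jg\|_{L^p}$, and since the sequence $\big(2^{\ell\sigma}\mathbf{1}_{\ell\le N}\big)_\ell$ lies in $\ell^1$ precisely when $\sigma>0$, Young's inequality bounds the $\ell^r$ norm by $\|f\|_{L^\infty}\|g\|_{B^\sigma_{p,r}}$. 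Adding the three contributions gives the product law; this is exactly where the hypothesis $\sigma>0$ is genuinely used, in making the remainder summable.

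The second inequality I would obtain as a corollary by applying the first with $\sigma=s-1$ to each scalar product $f_i\,\partial_ig$. Under \eqref{eq:spr1} the embedding $B^{s-1}_{p,r}\hookrightarrow L^\infty$ gives $\|f\|_{L^\infty}\les\|f\|_{B^{s-1}_{p,r}}$ and $\|\partial_ig\|_{L^\infty}\les\|\partial_ig\|_{B^{s-1}_{p,r}}\les\|g\|_{B^s_{p,r}}$, while trivially $\|\partial_ig\|_{B^{s-1}_{p,r}}\les\|g\|_{B^s_{p,r}}$. Feeding these into the first estimate and summing over $i$ produces $\|f\cd\na g\|_{B^{s-1}_{p,r}}\les\|f\|_{B^{s-1}_{p,r}}\|g\|_{B^s_{p,r}}$.

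The third inequality \eqref{cj} is the delicate one and is where I expect the main work to lie, since its right-hand side distributes the derivative unevenly and the two terms must be kept separate. I would decompose $f_i\,\partial_ig=T_{f_i}\partial_ig+T_{\partial_ig}f_i+R(f_i,\partial_ig)$ and estimate in $B^s_{p,r}$. For the first piece the gradient is absorbed into the smoother factor: $\|T_{f_i}\partial_ig\|_{B^s_{p,r}}\les\|f_i\|_{L^\infty}\|\partial_ig\|_{B^s_{p,r}}\les\|f\|_{B^{s-1}_{p,r}}\|g\|_{B^{s+1}_{p,r}}$, which yields the first term. For the other two pieces I would instead pay for the derivative with the $L^\infty$-norm of $\na g$, using $\|\partial_ig\|_{L^\infty}\les\|g\|_{B^s_{p,r}}$ (again the embedding): the second paraproduct obeys $\|T_{\partial_ig}f_i\|_{B^s_{p,r}}\les\|\partial_ig\|_{L^\infty}\|f_i\|_{B^s_{p,r}}\les\|f\|_{B^s_{p,r}}\|g\|_{B^s_{p,r}}$, and by symmetry of $R$ the remainder satisfies $\|R(f_i,\partial_ig)\|_{B^s_{p,r}}=\|R(\partial_ig,f_i)\|_{B^s_{p,r}}\les\|\partial_ig\|_{L^\infty}\|f_i\|_{B^s_{p,r}}\les\|f\|_{B^s_{p,r}}\|g\|_{B^s_{p,r}}$, the last step being the remainder bound from the first step with $\sigma=s>0$. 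Both give the second term, and summing over $i$ yields \eqref{cj}. The main obstacle is the bookkeeping: organizing the decomposition so that exactly one derivative lands on the smooth factor in the first piece while the other two absorb the derivative into $\|\na g\|_{L^\infty}$, and checking that the borderline endpoint $s=3/p+1,\ r=1$ (where the embedding into $L^\infty$ is sharp and forces $r=1$) still goes through by the same summation.
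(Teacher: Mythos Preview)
Your proof is correct and follows the standard Bony paraproduct approach. Note, however, that the paper does not actually prove this lemma: it is stated as a citation from \cite{B} (Bahouri--Chemin--Danchin) and used as a black box, so there is no ``paper's own proof'' to compare against. Your argument is precisely the textbook derivation one finds in that reference, with the three pieces $T_fg$, $T_gf$, $R(f,g)$ handled exactly as you describe, and the embedding $B^{s-1}_{p,r}\hookrightarrow L^\infty$ invoked at the right places; in particular your treatment of \eqref{cj}, splitting so that $T_{f_i}\partial_ig$ produces the $\|f\|_{B^{s-1}_{p,r}}\|g\|_{B^{s+1}_{p,r}}$ term while the other two pieces absorb $\|\nabla g\|_{L^\infty}$ and yield $\|f\|_{B^s_{p,r}}\|g\|_{B^s_{p,r}}$, is the correct bookkeeping.
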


\begin{lemma}[\cite{GLY}]\label{lem:P}
Assume that $(s,p,r)$ satisfies \eqref{eq:spr1}. Then there exists a constant $C$, depending only on $d,p,r,s$, such that for all $u,v\in B^s_{p,r}$ with $\mathrm{div\,} u=\mathrm{div\,} v=0$
\begin{align*}
&\|\mathcal{Q}(u\cdot \na v)\|_{B^s_{p,r}}\leq C \f(\|u\|_{C^{0,1}}\|v\|_{B^s_{p,r}}+\|v\|_{C^{0,1}}\|u\|_{B^s_{p,r}}\g)
\end{align*}
with $\mathcal{Q}= -(-\Delta)^{-1}\nabla {\rm{div}}$. Furthermore, there holds
\begin{align*}
&\|\mathcal{Q}(u\cdot \na v)\|_{B^s_{p,r}}\leq C \|u\|_{B^s_{p,r}}\|v\|_{B^s_{p,r}},\\
&\|\mathcal{Q}(u\cdot \na v)\|_{B^{s-1}_{p,r}}\leq C \min\f\{\|u\|_{B^{s-1}_{p,r}}\|v\|_{B^s_{p,r}},\, \|v\|_{B^{s-1}_{p,r}}\|u\|_{B^s_{p,r}}\g\}.
\end{align*}
\end{lemma}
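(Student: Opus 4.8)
The plan is to derive both bounds from three facts about $\mathcal{Q}$ together with Bony's paraproduct calculus. Recall that $\mathcal{Q}=-(-\Delta)^{-1}\na\,\div$ has entries $\mathcal{Q}_{ij}=-R_iR_j$, i.e. a Fourier multiplier whose symbol $\xi_i\xi_j/|\xi|^2$ is smooth on $\R^3\setminus\{0\}$ and homogeneous of degree $0$; hence, by the Mikhlin--H\"ormander theorem, $\mathcal{Q}$ is bounded on $L^p$ for $1<p<\infty$, and since it commutes with every Littlewood--Paley block $\Delta_j$ it is bounded on $B^\sigma_{p,r}$ for all $\sigma\in\R$ and $1\le r\le\infty$ as soon as $1<p<\infty$. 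I will use repeatedly that $\mathcal{Q}w=0$ whenever $\div w=0$ (so $\mathcal{Q}\Delta_j v=\Delta_j\mathcal{Q}v=0$ here), that $\mathcal{Q}$ commutes with each $\pa_k$, and the identity $\mathcal{Q}(u\cd\na v)=\mathcal{Q}(v\cd\na u)$ for divergence-free fields recorded in Section~\ref{sec2}.

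The $B^{s-1}$ bound is the soft one. The product estimate of Lemma~\ref{lp} gives $\|u\cd\na v\|_{B^{s-1}_{p,r}}\le C\|u\|_{B^{s-1}_{p,r}}\|v\|_{B^s_{p,r}}$, and the $B^{s-1}_{p,r}$-boundedness of $\mathcal{Q}$ then yields $\|\mathcal{Q}(u\cd\na v)\|_{B^{s-1}_{p,r}}\le C\|u\|_{B^{s-1}_{p,r}}\|v\|_{B^s_{p,r}}$. Applying the same inequality to $v\cd\na u$ and invoking $\mathcal{Q}(u\cd\na v)=\mathcal{Q}(v\cd\na u)$ produces the companion bound $\|\mathcal{Q}(u\cd\na v)\|_{B^{s-1}_{p,r}}\le C\|v\|_{B^{s-1}_{p,r}}\|u\|_{B^s_{p,r}}$; taking the smaller of the two gives the stated minimum.

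For the $B^s$ bound I would split $u\cd\na v=T_{u_k}\pa_k v+T_{\pa_k v}u_k+R(u_k,\pa_k v)$ (sum over $k$). The last two terms are harmless: in $T_{\pa_k v}u_k$ the derivative sits on the low-frequency factor, so $\|T_{\pa_k v}u_k\|_{B^s_{p,r}}\le C\|\na v\|_{L^\infty}\|u\|_{B^s_{p,r}}$, while the remainder gains regularity and obeys $\|R(u_k,\pa_k v)\|_{B^s_{p,r}}\le C\|u\|_{B^s_{p,r}}\|\na v\|_{L^\infty}$; since $\|\na v\|_{L^\infty}\les\|v\|_{B^s_{p,r}}$ (from $B^{s-1}_{p,r}\hookrightarrow L^\infty$) and $\mathcal{Q}$ is $B^s_{p,r}$-bounded, both contribute $\les\|u\|_{B^s_{p,r}}\|v\|_{B^s_{p,r}}$. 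The dangerous term is $T_{u_k}\pa_k v$, where the derivative lands on the high-frequency factor and the order-$0$ operator $\mathcal{Q}$ cannot absorb it. Here I exploit that $v$ is divergence free: since $\mathcal{Q}\Delta_j v=0$,
\[
\mathcal{Q}\big(S_{j-1}u_k\,\pa_k\Delta_j v\big)=[\mathcal{Q},\,S_{j-1}u\cd\na]\Delta_j v=[\mathcal{Q},\,S_{j-1}u_k]\,\pa_k\Delta_j v,
\]
the last step using that $\mathcal{Q}$ commutes with $\pa_k$. This is a first Calder\'on-type commutator, which recovers the lost derivative through $\|[\mathcal{Q},a_k]\pa_k w\|_{L^p}\le C\|\na a\|_{L^\infty}\|w\|_{L^p}$, so each block satisfies $\|\mathcal{Q}(S_{j-1}u\cd\na\Delta_j v)\|_{L^p}\le C\|\na u\|_{L^\infty}\|\Delta_j v\|_{L^p}$. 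As these pieces are frequency-localized near $2^j$, multiplying by $2^{js}$ and summing in $\ell^r$ gives $\|\mathcal{Q}(T_{u_k}\pa_k v)\|_{B^s_{p,r}}\le C\|\na u\|_{L^\infty}\|v\|_{B^s_{p,r}}\les\|u\|_{B^s_{p,r}}\|v\|_{B^s_{p,r}}$. Collecting the three contributions proves the first estimate.

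The crux is precisely this commutator step. A direct bound on $\mathcal{Q}(S_{j-1}u\cd\na\Delta_j v)$ carries a factor $2^j$ coming from the derivative and would illegitimately demand $v\in B^{s+1}_{p,r}$; the entire gain comes from $\div v=0$, which makes $\mathcal{Q}\Delta_j v$ vanish and converts the expression into the commutator $[\mathcal{Q},S_{j-1}u_k]\pa_k\Delta_j v$, whose $L^p$ norm is controlled by $\|\na u\|_{L^\infty}$ with no power of $2^j$. Establishing $\|[\mathcal{Q},a_k]\pa_k w\|_{L^p}\les\|\na a\|_{L^\infty}\|w\|_{L^p}$ is a Calder\'on--Coifman--Meyer commutator estimate (seen cleanly by integrating by parts in the singular kernel of $\mathcal{Q}$ so that the derivative falls on $a$), and it is exactly where the restriction $1<p<\infty$, needed for the $L^p$-boundedness of the Riesz transforms, is indispensable. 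Everything else is routine paraproduct bookkeeping.
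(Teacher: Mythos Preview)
The paper does not prove this lemma; it is quoted from \cite{GLY} without argument, so there is no ``paper's proof'' to match. Your argument is correct: the $B^{s-1}$ estimate follows immediately from Lemma~\ref{lp} plus $L^p$-boundedness of the Riesz transforms and the symmetry $\mathcal{Q}(u\cd\na v)=\mathcal{Q}(v\cd\na u)$, and your commutator treatment of the paraproduct piece $T_{u_k}\pa_k v$ does recover the lost derivative via the Calder\'on estimate $\|[\mathcal{Q},a_k]\pa_k w\|_{L^p}\les\|\na a\|_{L^\infty}\|w\|_{L^p}$.

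That said, there is a much shorter route, and the paper in fact uses it implicitly later (in the proof of Theorem~\ref{th3}). Since $\div u=\div v=0$, one has $\div(u\cd\na v)=\pa_i u^j\,\pa_j v^i$, so
\[
\mathcal{Q}(u\cd\na v)=-(-\Delta)^{-1}\na\big(\pa_i u^j\,\pa_j v^i\big).
\]
The operator $(-\Delta)^{-1}\na$ is a Fourier multiplier of order $-1$, bounded from $B^{s-1}_{p,r}$ to $B^{s}_{p,r}$ for $1<p<\infty$; hence
\[
\|\mathcal{Q}(u\cd\na v)\|_{B^s_{p,r}}\les\|\pa_i u^j\,\pa_j v^i\|_{B^{s-1}_{p,r}}\les\|\na u\|_{L^\infty}\|v\|_{B^s_{p,r}}+\|\na v\|_{L^\infty}\|u\|_{B^s_{p,r}}\les\|u\|_{B^s_{p,r}}\|v\|_{B^s_{p,r}},
\]
using only the standard product law in $B^{s-1}_{p,r}$. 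This bypasses the Bony decomposition and the Calder\'on commutator altogether; the divergence-free condition is exploited through the algebraic identity for $\div(u\cd\na v)$ rather than through $\mathcal{Q}\Delta_j v=0$. Your approach buys generality (it would adapt to operators without such a clean divergence identity), while the direct route is more elementary and makes transparent why no regularity beyond $B^s_{p,r}$ is needed.
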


\section{Proof of Theorem \ref{th2}}\label{sec3}

In this section, we prove Theorem \ref{th2}.
Letting $\widehat{\phi}\in \mathcal{C}^\infty_0(\mathbb{R})$ be an even, real-valued and non-negative function on $\R$ and satisfy
\begin{align*}
{\widehat{\phi}(\xi)=}\begin{cases}
1,&\text{if}\quad |\xi|\leq \frac{1}{64},\\
0,&\text{if}\quad |\xi|\geq \frac{1}{8}.
\end{cases}
\end{align*}
The inverse Fourier transform allows us to recover $\phi$ from $\widehat{\phi}$, namely, $\phi(x)=\mathcal{F}^{-1}(\widehat{\phi}(\xi))$. Then we can define the new real-valued function
$$\theta(x)=\phi(x_1)\phi(x_2)\phi(x_3).$$
It is easy to check that $\theta(0)=\phi^3(0)>0$ and
\bal\label{s1}
\mathrm{supp} \ \widehat{\theta}\subset \left\{\xi=(\xi_1,\xi_2,\xi_3)\in\R^3: \ |\xi|\leq  \frac{\sqrt{3}}{8}\right\}.
\end{align}
Thus for $\lambda\gg1$ and $T\in\{\sin,\,\cos\}$
\bal\label{s2}
\mathrm{supp} \ \mathcal{F}\Big(\theta(x)T(\lambda x_1)\Big)\subset \left\{\xi\in\R^3: \ \lambda-\fr18\leq|\xi_1|\leq \lambda+\frac{1}{8},\ |\xi_2|\leq \frac{1}{8},\ |\xi_3|\leq\frac{1}{8} \right\}.
\end{align}
For any $p\in[1,\infty]$, there exist two positive constants $C_1$ and $C_2$ such that
\begin{align*}
C_1\leq\|\phi\|_{L^p(\R)}\leq C_2.
\end{align*}
{\bf Definition 1.}\, Let $(s,p,r)$ satisfies \eqref{eq:spr1} and $n\gg1$. We define the high frequency function $f_n$ and the low frequency function $g_n$ by
\bbal
&f_n(x)=2^{-n(s+1)}
\f(-\pa_2,\pa_1,0\g)
\f[\theta(x)\cos \f(\frac{17}{12}2^nx_1\g)\g],\\
&g_n(x)=2^{-n}
\f(-\pa_2,\pa_1,0\g)\theta(x).
\end{align*}
\begin{remark}\label{100} The choice of $\frac{17}{12}\in(\frac{4}{3},\frac{3}{2})$ is crucial but not unique. In fact,
due to \eqref{s2}, which implies that
\bbal
\mathrm{supp}\ \mathcal{F}\f(f_n\g)\subset \left\{\xi\in\R^3: \ \frac{17}{12}2^n-1\leq |\xi|\leq \frac{17}{12}2^n+1\right\},
\end{align*}
and notice that
$
\varphi(2^{-j}\xi)\equiv 1$ in $\left\{\xi\in\R^3: \ \frac{4}{3}2^{j}\leq |\xi|\leq \frac{3}{2}2^{j}\right\},
$
we have
$
\mathcal{F}(\dot{\Delta}_jf_n)=\varphi(2^{-j}\cdot)\widehat{f_n}=0$ for  $j\neq n,
$
and thus
$
\dot{\Delta}_jf_n=
f_n$ if $j=n$.
\end{remark}
\begin{lemma}\label{y1}
Let $f_n$ and $g_n$ be defined as above. There exists a positive constant $C$ such that for any $\sigma\in\R$
\bbal
&\|f_n^{(1)}\|_{B^\sigma_{p,r}}\leq C 2^{n(\sigma-s-1)},\\
&\|f_n^{(2)}\|_{B^\sigma_{p,r}}\thickapprox 2^{n(\sigma-s)},\\
&\|f_{n}\|_{B_{p,r}^{\sigma}}\thickapprox 2^{n(\sigma-s)},\\
&\|g_{n}\|_{B_{p,r}^{\sigma}}\thickapprox 2^{-n}.
\end{align*}
\end{lemma}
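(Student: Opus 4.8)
The plan is to compute the Besov norms of $f_n$ and $g_n$ by exploiting the precise frequency localizations recorded in \eqref{s1}--\eqref{s2}. The key observation is that $\widehat{\theta}$ is supported in the ball of radius $1/2$, so $g_n = 2^{-n}(-\pa_2,\pa_1,0)\theta$ is supported in low frequencies and only the blocks $\Delta_{-1}$ and $\Delta_0$ act nontrivially on it. Hence $\|g_n\|_{B^\sigma_{p,r}} \approx \|g_n\|_{L^p} \approx 2^{-n}\|\theta\|_{W^{1,p}}$, which is $\approx 2^{-n}$ since $\theta$ is a fixed Schwartz function independent of $n$; the constants hidden in $\approx$ come from the uniform bounds $C_1 \le \|\phi\|_{L^p} \le C_2$ and from $\|\pa_i\theta\|_{L^p}$ being a fixed positive number.

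For $f_n$, the multiplication by $\cos(\tfrac{17}{12}2^n x_1)$ translates the Fourier support of $\theta(x)\cos(\cdot)$ into the annulus $\{\tfrac{17}{12}2^n - \tfrac12 \le |\xi| \le \tfrac{17}{12}2^n + \tfrac12\}$ as in \eqref{s2}. Since $\tfrac{17}{12}2^n$ lies strictly between $2^n \cdot \tfrac43 \cdot \tfrac{1}{?}$-type dyadic thresholds — concretely it sits inside the range $[\tfrac43 2^{n-?}, \tfrac83 2^{n-?}]$ so that exactly one dyadic block $\Delta_n$ captures this function (one checks $\tfrac43 \cdot 2^n \le \tfrac{17}{12} 2^n - \tfrac12$ and $\tfrac{17}{12} 2^n + \tfrac12 \le \tfrac83 \cdot 2^n$ for $n \gg 1$, so $\varphi(2^{-n}\xi) \equiv 1$ on the whole support). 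Therefore $\Delta_j f_n = 0$ for $j \ne n$ and $\Delta_n f_n = f_n$, which gives $\|f_n\|_{B^\sigma_{p,r}} = 2^{n\sigma}\|f_n\|_{L^p}$. It then remains to estimate $\|f_n^{(k)}\|_{L^p}$ for each component.

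The componentwise $L^p$ estimates are the heart of the computation. For the third component $f_n^{(3)} = 0$ trivially. For $f_n^{(2)} = 2^{-n(s+1)}\pa_1[\theta(x)\cos(\tfrac{17}{12}2^n x_1)]$, the derivative hitting the cosine produces the leading term $-2^{-n(s+1)}\cdot \tfrac{17}{12}2^n\, \theta(x)\sin(\tfrac{17}{12}2^n x_1)$, whose $L^p$ norm is $\approx 2^{-ns}$ because $\|\theta(x)\sin(\tfrac{17}{12}2^n x_1)\|_{L^p} \approx \|\theta\|_{L^p}$ up to constants uniform in $n$ (this uses a Riemann–Lebesgue / oscillatory averaging argument, or more simply that $|\sin(\tfrac{17}{12}2^n x_1)|$ has average bounded away from $0$ over the support of $\theta$, combined with the trivial upper bound); the remaining term $2^{-n(s+1)}\pa_1\theta(x)\cos(\cdot)$ has $L^p$ norm $O(2^{-n(s+1)})$ and is negligible. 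Multiplying by $2^{n\sigma}$ gives $\|f_n^{(2)}\|_{B^\sigma_{p,r}} \approx 2^{n(\sigma - s)}$. For the first component $f_n^{(1)} = -2^{-n(s+1)}\pa_2[\theta(x)\cos(\tfrac{17}{12}2^n x_1)] = -2^{-n(s+1)}(\pa_2\theta)(x)\cos(\tfrac{17}{12}2^n x_1)$, since $\pa_2$ does not touch the $x_1$-oscillation there is no gain of $2^n$, so $\|f_n^{(1)}\|_{L^p} \lesssim 2^{-n(s+1)}\|\pa_2\theta\|_{L^p} \lesssim 2^{-n(s+1)}$, hence $\|f_n^{(1)}\|_{B^\sigma_{p,r}} \lesssim 2^{n(\sigma-s-1)}$. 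Finally $\|f_n\|_{B^\sigma_{p,r}} \approx \|f_n^{(2)}\|_{B^\sigma_{p,r}} \approx 2^{n(\sigma-s)}$ because the second component dominates. I expect the only genuinely delicate point to be the lower bound $\|\theta(x)\sin(\tfrac{17}{12}2^n x_1)\|_{L^p} \gtrsim 1$: for $p=2$ it follows from Plancherel and the frequency support being a single annulus (so no cancellation with the $\cos$ term), while for general $p \in [1,\infty]$ one argues that the oscillatory factor cannot make the $L^p$ norm decay — e.g. by testing against a fixed bump or by a change of variables $y_1 = 2^n x_1$ showing the $L^p$ norm converges to a positive constant times $\|\theta(0,\cdot,\cdot)\|$-type quantity. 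Everything else is bookkeeping with Bernstein's inequality (Lemma \ref{lem2.1}) and the single-block property established above.
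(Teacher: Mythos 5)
Your proposal is correct and follows exactly the route the paper intends: the paper's own proof is a one-line appeal to the frequency localization \eqref{s2} and Bernstein's inequality, and your argument is precisely the worked-out version of that (single dyadic block $\Delta_n$ captures $f_n$, only $\Delta_{-1}$ sees $g_n$, and the componentwise $L^p$ norms are computed with the oscillatory lower bound $\liminf_n\|\theta(x)\sin(\tfrac{17}{12}2^nx_1)\|_{L^p}>0$, the same device the paper uses explicitly in Lemma \ref{ZZ}). Only trivial slips: for $\Delta_nf_n=f_n$ the relevant check is $[\tfrac{17}{12}2^n-\tfrac12,\tfrac{17}{12}2^n+\tfrac12]\subset[\tfrac432^n,\tfrac322^n]$ (where $\varphi(2^{-n}\cdot)\equiv1$), not the $\tfrac83$ threshold you wrote, and for $g_n$ only $\Delta_{-1}$ is nonzero since $\mathrm{supp}\,\widehat{g_n}\subset\{|\xi|\le\tfrac12\}$ is disjoint from $\mathrm{supp}\,\varphi$ — neither affects the conclusion.
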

\begin{proof}\,
From Remark \ref{100} and Bernstein's inequality (see Lemma \ref{lem2.1}),  we obtain the desired results of Lemma \ref{y1}.
\end{proof}
\begin{lemma}\label{ZZ}
Let $f_n$ and $g_n$ be defined as above. There exists a positive constant $c$ such that
\bal\label{yZ}
&\liminf_{n\rightarrow \infty}\f\|g_n\cd\na f_n^{(2)}\g\|_{B^s_{p,\infty}}\geq c.
\end{align}
\end{lemma}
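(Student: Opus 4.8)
\textbf{Proof proposal for Lemma \ref{ZZ}.}

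The plan is to compute $g_n \cdot \na f_n^{(2)}$ explicitly and isolate its ``worst'' frequency piece, which will turn out to be a single high-frequency packet supported in an annulus of size $\approx 2^n$, so that its $B^s_{p,\infty}$ norm is controlled from below by the $L^p$ norm of that packet multiplied by $2^{ns}$. First I would write $f_n^{(2)} = 2^{-n(s+1)}\pa_1\big[\theta(x)\cos(\tfrac{17}{12}2^n x_1)\big]$. Expanding the derivative,
\begin{align*}
f_n^{(2)}(x) = 2^{-n(s+1)}\Big[(\pa_1\theta)\cos\big(\tfrac{17}{12}2^n x_1\big) - \tfrac{17}{12}2^n\,\theta(x)\sin\big(\tfrac{17}{12}2^n x_1\big)\Big],
\end{align*}
so the leading term (the one carrying the full factor $2^n$) is $-\tfrac{17}{12}2^{-ns}\,\theta(x)\sin(\tfrac{17}{12}2^n x_1)$; the remaining term is $O(2^{-n(s+1)})$ in every $B^\sigma_{p,r}$ by Lemma \ref{y1}-type estimates. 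Similarly $g_n = 2^{-n}(-\pa_2\theta,\pa_1\theta,0)$, which is a fixed low-frequency profile times $2^{-n}$. Then $g_n\cdot\na f_n^{(2)} = g_n^{(1)}\pa_1 f_n^{(2)} + g_n^{(2)}\pa_2 f_n^{(2)}$, and the dominant contribution comes from $g_n^{(1)}\pa_1 f_n^{(2)}$, where $\pa_1$ again lands on the oscillation and produces another factor $2^n$. Collecting the top-order term,
\begin{align*}
g_n\cdot\na f_n^{(2)} = \tfrac{17^2}{12^2}\,2^{-n(s+1)}\cdot 2^n\cdot\big(-\pa_2\theta(x)\big)\,\theta(x)\cos\big(\tfrac{17}{12}2^n x_1\big) + (\text{lower order}),
\end{align*}
i.e. the main term is $2^{-ns}$ times the fixed nonzero profile $\Psi(x):=\tfrac{17^2}{12^2}\,(-\pa_2\theta)(x)\,\theta(x)$ modulated by $\cos(\tfrac{17}{12}2^n x_1)$, and all error terms are $O(2^{-n(s+1)}\cdot 2^{n})=O(2^{-ns})$... wait, more precisely the genuine error terms carry at most one factor of $2^n$ less, hence are $O(2^{-n(s+1)})\cdot(\text{bounded frequency factors})$, negligible after multiplication by $2^{ns}$ when testing in $B^s_{p,\infty}$.

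Next I would localize in frequency. By \eqref{s1} the profile $\Psi$ has $\widehat\Psi$ supported in $\{|\xi|\le 1\}$ (product of two spectrally-small-support functions still has support in a ball of radius $\le 1$), so $\Psi(x)\cos(\tfrac{17}{12}2^n x_1)$ has Fourier support in the annulus $\{\,|\xi|\in[\tfrac{17}{12}2^n-1,\ \tfrac{17}{12}2^n+1]\,\}$, analogously to \eqref{s2}. For $n$ large this annulus meets exactly one dyadic shell, namely the one with $2^j\approx \tfrac{17}{12}2^n$; let $j_n$ be that index (so $j_n = n+1$, since $\tfrac43 \le \tfrac{17}{12}\cdot 2^{-1}\cdot 2 = \tfrac{17}{12}$... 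I would just note $\tfrac{17}{12}2^n\in[2^n,2^{n+1})$ and check it lands in $\mathrm{supp}\,\varphi(2^{-j}\cdot)$ for $j=n$ or $j=n+1$ only). Therefore
\begin{align*}
\big\|g_n\cdot\na f_n^{(2)}\big\|_{B^s_{p,\infty}} \ge 2^{j_n s}\big\|\Delta_{j_n}\big(g_n\cdot\na f_n^{(2)}\big)\big\|_{L^p} \gtrsim 2^{ns}\Big(2^{-ns}\|\Psi\cos(\tfrac{17}{12}2^n x_1)\|_{L^p} - C2^{-n(s+1)}\Big).
\end{align*}
The last step is the lower bound $\|\Psi(x)\cos(\tfrac{17}{12}2^n x_1)\|_{L^p(\R^3)} \ge c_0 > 0$ uniformly in $n$: this is a standard fact, e.g. via $\|\Psi\cos(\la x_1)\|_{L^p}^p = \int |\Psi(x)|^p|\cos(\la x_1)|^p\,dx \to \fint|\cos|^p\cdot\|\Psi\|_{L^p}^p$ as $\la\to\infty$ by the Riemann--Lebesgue/weak-* convergence of $|\cos(\la x_1)|^p$ to its mean (for $p<\infty$), and for $p=\infty$ directly by evaluating near a point where $|\Psi|$ is near its max and $\cos(\la x_1)$ near $\pm1$; in either case one uses $\Psi\not\equiv 0$, which holds because $\theta(0)>0$ and $\pa_2\theta\not\equiv0$. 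Since $\|\Psi\|_{L^p}>0$, for $n$ large the bracket is $\ge \tfrac12 c_0 2^{-ns}$, giving $\|g_n\cdot\na f_n^{(2)}\|_{B^s_{p,\infty}}\gtrsim c_0/2 =: c$, which is \eqref{yZ}.

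The main obstacle I anticipate is bookkeeping: carefully separating the genuinely top-order term (two derivatives hitting oscillations, one from $f_n^{(2)}$'s own $\pa_1$ and one from $g_n\cdot\na$) from the cross terms where a derivative hits the smooth profile $\theta$, and verifying that each such cross term is smaller by a factor $2^{-n}$ after the frequency localization — because the lower-order terms may have different (smaller) frequency support and one must be sure they cannot cancel the main packet within the shell $\Delta_{j_n}$. This is handled by noting that the main packet sits in a thin annulus around $\tfrac{17}{12}2^n$ while the pure error terms (those without the full $2^n$ gain) are either at frequency $\approx\tfrac{17}{12}2^n$ but with amplitude $O(2^{-n(s+1)})$, or at low frequency $O(1)$ and thus killed by $\Delta_{j_n}$ for large $n$; so no cancellation can defeat the $c_0 2^{-ns}$ lower bound. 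Everything else is a direct application of Bernstein's inequality (Lemma \ref{lem2.1}) and the support computations \eqref{s1}--\eqref{s2}.
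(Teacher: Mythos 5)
Your proposal is correct and follows essentially the same route as the paper's own proof: both reduce the $B^s_{p,\infty}$ norm to $2^{ns}\|\cdot\|_{L^p}$ via the Fourier-support observation that the product lives in a single annulus around $\tfrac{17}{12}2^n$, isolate the top-order term $\approx 2^{-ns}\,\phi^2(x_1)\phi(x_2)\phi'(x_2)\phi^2(x_3)\cos(\tfrac{17}{12}2^nx_1)$ (your $\Psi$-modulated packet) with all cross terms smaller by $2^{-n}$, and conclude from the uniform positivity of $\liminf_n\|\phi^2\cos(\tfrac{17}{12}2^n\cdot)\|_{L^p(\R)}$. Your added justification of that last limit via weak-$*$ averaging of $|\cos(\lambda x_1)|^p$ is a detail the paper leaves implicit, but the argument is the same.
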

\begin{proof}  \, Due to \eqref{s1} and \eqref{s2}, we have
\bbal
\mathrm{supp}\ \mathcal{F}\f(g_n\cd \na f_n^{(2)}\g)\subset \left\{\xi\in\R^3: \ \frac{17}{12}2^n-1\leq |\xi|\leq \frac{17}{12}2^n+1\right\},
\end{align*}
which implies
\begin{align*}
{\Delta_j\f(g_n\cd\na f_n^{(2)}\g)=}
\begin{cases}
g_n\cd \na f_n^{(2)}, &\text{if}\; j=n,\nonumber\\
0, &\text{if}\; j\neq n.\nonumber
\end{cases}
\end{align*}
Thus, we have
\bal\label{ml}
\f\|g_n\cd \na f_n^{(2)}\g\|_{B^s_{p,\infty}}&=2^{ns}\f\|g_n\cd \na f_n^{(2)}\g\|_{L^p}.
\end{align}
The definitions of $f_n$ and $g_n$ directly tell us that
\bbal
2^{ns}g_n\cd \na f_n^{(2)}&=-\left(\frac{17}{12}\right)^2\phi^2(x_1)\cos \left(\frac{17}{12}2^nx_1\right)\phi(x_2)\phi'(x_2)\phi^2(x_3)+\mathbf{R}.
\end{align*}
For the remainder terms $\mathbf{R}$, noticing that at most one partial derivative falls on cosine function, then $\|\mathbf{R}\|_{L^p}\leq C2^{-n}$.
Thus, we have
\bbal
&2^{ns}\|g_n\cd \na f_n^{(2)}\|_{L^p}\geq \left\|\phi^2(x)\cos \left(\frac{17}{12}2^nx\right)\right\|_{L^p(\R)}\|\phi\|^{2}_{L^{2p}(\R)}\f\|\phi(x)\phi'(x)\g\|_{L^p(\R)}-C2^{-n},
\end{align*}
Plugging the above into \eqref{ml} yields
\bal\label{m9}
\|g_n\cd \na f_n\|_{B^s_{p,\infty}}&\geq c\left\|\phi^2(x)\cos \left(\frac{17}{12}2^nx\right)\right\|_{L^p(\R)}-C2^{-n}.
\end{align}
Notice that the fact
\begin{align*}
\liminf_{n\rightarrow \infty}\left\|\phi^2(x)\cos \left(\fr{17}{12}2^nx\right)\right\|_{L^p(\R)}\geq c>0,
\end{align*}
from \eqref{m9}, we get the desired result \eqref{yZ}. Thus we finish the proof of Lemma \ref{ZZ}.\end{proof}

\begin{proposition}\label{pro1}
Under the assumptions of Theorem \ref{th2}, we have
\bal\label{et0}
\|\mathbf{S}_{t}(u_0)-u_0+t\mathbf{v}_0(u_0)\|_{B^{s}_{p,r}}\leq Ct^{2}\mathbf{E}(u_0),
\end{align}
here and in what follows we denote
\bbal
\mathbf{E}(u_0)&:=1+\|u_0\|_{B^{s-1}_{p,r}}\f(\|u_0\|_{B^{s+1}_{p,r}}+
\|u_0\|_{B^{s-1}_{p,r}}
\|u_0\|_{B^{s+2}_{p,r}}\g).
\end{align*}
\end{proposition}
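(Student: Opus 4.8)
The plan is to prove the estimate \eqref{et0} via a Taylor-type expansion of the solution map in time, controlled by a priori bounds on the solution in higher-regularity Besov spaces. First I would recall the projected form of \eqref{CE}: applying $\mathcal{P}$ to the momentum equation and using $\mathcal{P}u=u$ (since $\div u=0$), one gets $\pa_t u=-\mathcal{P}(u\cd\na u+\Omega e_3\times u)=-\mathbf{v}_0(u)$ where now $\mathbf{v}_0(u):=\mathcal{P}(u\cd\na u+\Omega e_3\times u)$ is evaluated at the time-dependent solution $u=\mathbf{S}_t(u_0)$. Thus, writing $\mathbf{S}_t(u_0)-u_0=\int_0^t\pa_\tau\mathbf{S}_\tau(u_0)\,\dd\tau=-\int_0^t\mathbf{v}_0(\mathbf{S}_\tau(u_0))\,\dd\tau$, the quantity to be estimated becomes
\begin{align*}
\mathbf{S}_t(u_0)-u_0+t\mathbf{v}_0(u_0)=\int_0^t\big(\mathbf{v}_0(u_0)-\mathbf{v}_0(\mathbf{S}_\tau(u_0))\big)\,\dd\tau.
\end{align*}
So the core is to bound $\|\mathbf{v}_0(u_0)-\mathbf{v}_0(\mathbf{S}_\tau(u_0))\|_{B^s_{p,r}}$ by $C\tau\,\mathbf{E}(u_0)$, which upon integration in $\tau\in[0,t]$ yields the claimed $Ct^2\mathbf{E}(u_0)$.

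Next I would expand the difference $\mathbf{v}_0(u_0)-\mathbf{v}_0(u_\tau)$ (with $u_\tau:=\mathbf{S}_\tau(u_0)$) bilinearly: the Coriolis part gives $\Omega\,\mathcal{P}(e_3\times(u_0-u_\tau))$, and the convective part gives $\mathcal{P}\big((u_0-u_\tau)\cd\na u_0+u_\tau\cd\na(u_0-u_\tau)\big)$. Since $\mathcal{P}$ is bounded on $B^s_{p,r}$ for $p\in(1,\infty)$, and using the product/convection estimates of Lemma \ref{lp} — in particular \eqref{cj} — together with the embedding $B^{s-1}_{p,r}\hookrightarrow L^\infty$, I would bound
\begin{align*}
\|\mathbf{v}_0(u_0)-\mathbf{v}_0(u_\tau)\|_{B^s_{p,r}}\lesssim \|u_0-u_\tau\|_{B^s_{p,r}}\big(1+\|u_0\|_{B^s_{p,r}}+\|u_\tau\|_{B^s_{p,r}}\big).
\end{align*}
Then I need to control $\|u_0-u_\tau\|_{B^s_{p,r}}$. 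The crude bound $\|u_0-u_\tau\|_{B^s_{p,r}}\le\|u_0\|_{B^s_{p,r}}+\|u_\tau\|_{B^s_{p,r}}\lesssim\|u_0\|_{B^s_{p,r}}$ only gives an $O(\tau)$-order deficit at best after one more integration; to get the $\tau$ factor I instead write $u_0-u_\tau=\int_0^\tau\mathbf{v}_0(u_\sigma)\,\dd\sigma$ and estimate $\|\mathbf{v}_0(u_\sigma)\|_{B^s_{p,r}}\lesssim\|u_\sigma\|_{B^s_{p,r}}(1+\|u_\sigma\|_{B^s_{p,r}})$, so that $\|u_0-u_\tau\|_{B^s_{p,r}}\lesssim\tau\sup_{\sigma\le\tau}\|u_\sigma\|_{B^s_{p,r}}(1+\sup_{\sigma\le\tau}\|u_\sigma\|_{B^s_{p,r}})$. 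Combining with the previous display and integrating $\dd\tau$ from $0$ to $t$ produces the $t^2$ power.

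The remaining point — and the main technical obstacle — is to produce the precise form of $\mathbf{E}(u_0)$, which involves $\|u_0\|_{B^{s-1}_{p,r}}$, $\|u_0\|_{B^{s+1}_{p,r}}$ and $\|u_0\|_{B^{s+2}_{p,r}}$ rather than just $\|u_0\|_{B^s_{p,r}}$. This means the estimates above should be carried out more carefully: one tracks the solution not only in $B^s_{p,r}$ but simultaneously in $B^{s-1}_{p,r}$, $B^{s+1}_{p,r}$ and $B^{s+2}_{p,r}$, using the local well-posedness of Theorem \ref{th1} at each of these regularity levels (all are covered by \eqref{eq:spr1} since raising $s$ preserves the condition) to get, on a common time interval $[0,T]$, $\|u_\tau\|_{B^\sigma_{p,r}}\lesssim\|u_0\|_{B^\sigma_{p,r}}$ for $\sigma\in\{s-1,s,s+1,s+2\}$. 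Then in estimating $\|\mathbf{v}_0(u_0)-\mathbf{v}_0(u_\tau)\|_{B^s_{p,r}}$ one should use the sharper convection inequality \eqref{cj}, namely $\|f\cd\na g\|_{B^s_{p,r}}\lesssim\|f\|_{B^{s-1}_{p,r}}\|g\|_{B^{s+1}_{p,r}}+\|f\|_{B^s_{p,r}}\|g\|_{B^s_{p,r}}$, applied with $f=u_0-u_\tau$ (estimated in $B^{s-1}_{p,r}$ via $\|u_0-u_\tau\|_{B^{s-1}_{p,r}}\lesssim\tau\,\|u_0\|_{B^{s-1}_{p,r}}\|u_0\|_{B^s_{p,r}}$, using the lower-order bound in Lemma \ref{lem:P} for $\mathcal{Q}$ together with the analogous bound for $\mathcal{P}$) and $g=u_0$ or $u_\tau$ (estimated in $B^{s+1}_{p,r}$), so that the top-order term $\|u_0\|_{B^{s-1}_{p,r}}\|u_0\|_{B^{s+1}_{p,r}}$ appears, and the factor $\|u_0\|_{B^{s-1}_{p,r}}^2\|u_0\|_{B^{s+2}_{p,r}}$ comes from one further iteration of the same bootstrap on $\|u_0-u_\tau\|_{B^s_{p,r}}$. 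Careful bookkeeping of which norm is placed on which factor, so as to land exactly on $\mathbf{E}(u_0)$ without a gratuitous loss of derivatives, is where the bulk of the work lies; the rest is routine application of Lemma \ref{lp}, Lemma \ref{lem:P}, boundedness of $\mathcal{P}$ on $B^\sigma_{p,r}$, and Minkowski's inequality for the time integrals.
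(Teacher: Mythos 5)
Your proposal is correct and follows essentially the same route as the paper: write $\mathbf{S}_{t}(u_0)-u_0+t\mathbf{v}_0(u_0)=\int_0^t\big(\mathbf{v}_0(u_0)-\mathbf{v}_0(\mathbf{S}_\tau(u_0))\big)\,\dd\tau$, expand the difference bilinearly, and feed in first-order bounds $\|\mathbf{S}_\tau(u_0)-u_0\|_{B^{s+k}_{p,r}}\lesssim\tau(\cdots)$ for $k\in\{-1,0,1\}$ obtained from the equation together with the a priori bounds of Theorem \ref{th1} at the regularity levels $s-1,\dots,s+2$. The refined bookkeeping you describe, using \eqref{cj} and the embedding $B^{s-1}_{p,r}\hookrightarrow L^\infty$ to land on $\mathbf{E}(u_0)$, is exactly what the paper carries out in \eqref{et1}--\eqref{et4}.
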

\begin{proof}\,
For simplicity, we denote $u(t)=\mathbf{S}_t(u_0)$ here and in what follows. By Theorem \ref{th1}, one has
\bal\label{bound:u}
\|u(t)\|_{B^{s+k}_{p,r}}\leq C\|u_0\|_{B^{s+k}_{p,r}}\quad \text{for }\; k\in\{0,\pm1,2\}.
\end{align}
Using the Mean Value Theorem and Lemma \ref{lp}, we obtain from \eqref{CE} that
\bal\label{et1}
&\|u(t)-u_0\|_{B^{s-1}_{p,r}}\leq\int^t_0\|\pa_\tau u\|_{B^{s-1}_{p,r}} \dd\tau
\nonumber\\
\leq&~ \int^t_0\f(\|u\cd\na u\|_{B^{s-1}_{p,r}}+\|\mathcal{P}(\Omega e_3\times u)\|_{B^{s-1}_{p,r}}+\|\mathcal{Q}(u\cd \na u)\|_{B^{s-1}_{p,r}}\g)\dd \tau
\nonumber\\
\leq&~  C\int^t_0\f(\|u\|_{B^s_{p,r}}\|u\|_{B^{s-1}_{p,r}}+\|u\|_{B^{s-1}_{p,r}}\g)\dd \tau\nonumber\\
\leq&~  Ct\|u_0\|_{B^{s-1}_{p,r}},
\end{align}
\bal\label{et2}
&\|u(t)-u_0\|_{B^{s}_{p,r}}\leq\int^t_0\|\pa_\tau u\|_{B^{s}_{p,r}} \dd\tau\nonumber\\
\leq&~ \int^t_0\f(\|u\cd\na u\|_{B^{s}_{p,r}}+\|\mathcal{P}(\Omega e_3\times u)\|_{B^{s}_{p,r}}+\|\mathcal{Q}(u\cd \na u)\|_{B^{s}_{p,r}}\g)\dd \tau\nonumber\\
\leq&~ Ct\f(1+\|u_0\|_{B^{s-1}_{p,r}}\|u_0\|_{B^{s+1}_{p,r}}\g),
\end{align}
\bal\label{et3}
&\|u(t)-u_0\|_{B^{s+1}_{p,r}}\leq\int^t_0\|\pa_\tau u\|_{B^{s+1}_{p,r}} \dd\tau
\nonumber\\
\leq&~ \int^t_0\f(\|u\cd\na u\|_{B^{s+1}_{p,r}}+\|\mathcal{P}(\Omega e_3\times u)\|_{B^{s+1}_{p,r}}+\|\mathcal{Q}(u\cd \na u)\|_{B^{s+1}_{p,r}}\g)\dd \tau
\nonumber\\
\leq&~ Ct\f(\|u_0\|_{B^{s+1}_{p,r}}+\|u_0\|_{B^{s-1}_{p,r}}\|u_0\|_{B^{s+2}_{p,r}}\g).
\end{align}
Using the Mean Value Theorem once again, we obtain that
\bal\label{et4}
&\|u(t)-u_0+t\mathbf{v}_0(u_0)\|_{B^{s}_{p,r}}\leq \int^t_0\|\pa_\tau u+\mathbf{v}_0(u_0)\|_{B^{s}_{p,r}} \dd\tau
\nonumber\\
\leq&~\int^t_0\|\mathcal{Q}(u)-\mathcal{Q}(u_0)\|_{B^{s}_{p,r}} \dd\tau+\int^t_0\|u\cd\na u-u_0\cd\na u_0\|_{B^{s}_{p,r}} \dd\tau\nonumber\\
\quad&+\Omega\int^t_0\|\mathcal{P}\f(e_3\times (u-u_0)\g)\|_{B^{s}_{p,r}}\dd \tau
\nonumber\\
\lesssim&~ \int^t_0\|u(\tau)-u_0\|_{B^{s}_{p,r}} \dd\tau+\int^t_0\|u(\tau)-u_0\|_{L^\infty} \|u(\tau)\|_{B^{s+1}_{p,r}} \dd\tau
\nonumber\\
\quad & + \int^t_0\|u(\tau)-u_0\|_{B^{s+1}_{p,r}}  \|u_0\|_{L^\infty}\dd \tau\nonumber\\
\lesssim&~ \int^t_0\|u(\tau)-u_0\|_{B^{s}_{p,r}} \dd\tau+\|u_0\|_{B^{s+1}_{p,r}}\int^t_0\|u(\tau)-u_0\|_{L^\infty}  \dd\tau\nonumber\\
\quad &+ \|u_0\|_{L^\infty}\int^t_0\|u(\tau)-u_0\|_{B^{s+1}_{p,r}}  \dd \tau.
\end{align}
Plugging \eqref{et1}-\eqref{et3} into \eqref{et4} and using the fact that $B^{s-1}_{p,r}(\R^3)\hookrightarrow L^\infty(\R^3)$, yields the desired result of Proposition \ref{pro1}.
\end{proof}

{\bf Proof of Theorem \ref{th2}.} Let $u^n_0:=f_n+g_n$. Obviously, we have
\bbal
\|u^n_0-f_n\|_{B^s_{p,r}}=\|g_n\|_{B^s_{p,r}}\leq C2^{-n},
\end{align*}
which means that
\bbal
\lim_{n\to\infty}\|u^n_0-f_n\|_{B^s_{p,r}}=0.
\end{align*}
Obviously, we obtain from Lemma \ref{y1} that
\bbal
\|u^n_0\|_{B^{s+k}_{p,r}}\leq C2^{kn} \quad \text{for }\; k\in\{0,\pm1,2\}.
\end{align*}
We decompose the solution maps $\mathbf{S}_{t}(u^n_0)$ and $\mathbf{S}_{t}(f_n)$ as follows
\bbal
\mathbf{S}_{t}(u^n_0)&=\mathbf{S}_{t}(u^n_0)-u^n_0+t\mathbf{v}_0(u^n_0)+(f_n+g_n)-t(f_n+g_n)\cd\na(f_n+g_n)\\
&\quad+t\mathcal{Q}\big[(f_n+g_n)\cd\na(f_n+g_n)\big]-t\mathcal{P}(\Omega e_3\times (f_n+g_n)),\\
\mathbf{S}_{t}(f_n)&=\mathbf{S}_{t}(f_n)-f_n+t\mathbf{v}_0(f_n)+f_n-tf_n\cd\na f_n\\
&\quad+t\mathcal{Q}(f_n\cd\na f_n)-t\mathcal{P}(\Omega e_3\times f_n).
\end{align*}
Then, by Proposition \ref{pro1}, we deduce that
\bal\label{zz}
&\f\|\mathbf{S}^{(2)}_{t}(u^n_0)-\mathbf{S}^{(2)}_{t}(f_n)\g\|_{B^s_{p,r}}\nonumber\\
\geq&~ t\f\|\f[(f_n+g_n)\cd\na(f_n+g_n)-f_n\cd\na f_n\g]^{(2)}\g\|_{B^s_{p,r}}-t\|\mathcal{P}(\Omega e_3\times g_n)\|_{B^s_{p,r}} \nonumber\\
\quad&-t\f\|\mathcal{Q}\big[(f_n+g_n)\cd\na(f_n+g_n)-f_n\cd\na f_n\big]\g\|_{B^s_{p,r}}-\|g_n\|_{B^s_{p,r}}-Ct^2
\nonumber\\
 \geq&~ t\f\|g_n\cd\na f_n^{(2)}\g\|_{B^s_{p,r}}-Ct^2-C2^{-n},
\end{align}
where we have used the facts from Lemma \ref{lem:P} and Lemma \ref{y1}
\bbal
&\|\mathcal{Q}(f_n\cd\na g_n)\|_{B^s_{p,r}}\les \|f_n\cd\na g_n\|_{B^s_{p,r}}\les \|f_n\|_{B^s_{p,r}}\|g_n\|_{B^{s+1}_{p,r}}\les2^{-n},\\
&\|\mathcal{Q}(g_n\cd\na g_n)\|_{B^s_{p,r}}\les\|g_n\cd\na g_n\|_{B^s_{p,r}}\les \|g_n\|_{B^s_{p,r}}\|g_n\|_{B^{s+1}_{p,r}}\les2^{-2n},\\
&\|\mathcal{Q}(g_n\cd\na f_n)\|_{B^s_{p,r}}=\|\mathcal{Q}(f_n\cd\na g_n)\|_{B^s_{p,r}}\les \|f_n\|_{B^s_{p,r}}\|g_n\|_{B^{s+1}_{p,r}}\les2^{-n}.
\end{align*}
Using Lemma \ref{ZZ}, we obtain from \eqref{zz} that
\bbal
\f\|\mathbf{S}^{(2)}_{t}(u^n_0)-\mathbf{S}^{(2)}_{t}(f_n)\g\|_{B^s_{p,r}}&\geq ct-Ct^2-C2^{-n},
\end{align*}
which yields the result of Theorem \ref{th2}. Thus we complete the proof of Theorem \ref{th2}.

\section{Proof of Theorem \ref{th3}}\label{sec4}

Assume that $(s,p,r)$ satisfies \eqref{eq:spr1}. We define the initial data $u_0(x)$ as
\bbal
&u_0(x):=\f(-\pa_2,\pa_1,0\g)\sum\limits^{\infty}_{n=3}a_n(x),  \quad\text{where}\\
& a_n(x):= n^{-2}2^{-n(s+1)}\theta(x)\cos \f(\frac{17}{12}2^{n}x_1\g).
\end{align*}

\begin{lemma}\label{ley2} Assume that $s\in \R$ and $(p,r)\in[1,\infty]^2$. Define the divergence-free vector field $u_0$ as above.
Then there exists some sufficiently large $n\in \mathbb{N}$ and some positive constant $c_0$ such that
\begin{align}
&\|u_0\|_{B^s_{p,r}}\thickapprox\|u_0\|_{\dot{B}^s_{p,r}}\thickapprox 1, \label{z1}\\
&2^{ns}\f\|u_0\cd\na \De_{n}u_0^{(2)}\g\|_{L^p}\geq c_0n^{-2}2^{n}.\label{z2}
\end{align}
\end{lemma}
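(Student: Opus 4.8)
\textbf{Proof proposal for Lemma \ref{ley2}.}

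The plan is to reduce both claims to explicit computations with the building block $\theta(x)\cos(\frac{17}{12}2^n x_1)$, exploiting the lacunary separation of the frequency supports. For \eqref{z1}, I would first observe that, by \eqref{s2}, the Fourier transform of the $n$-th summand of $u_0$ is supported in the annulus $\{\frac{17}{12}2^n-1\le|\xi|\le\frac{17}{12}2^n+1\}$, and these annuli are pairwise disjoint for distinct large $n$ (since $\frac{17}{12}2^{n+1}-1>\frac{17}{12}2^n+1$). Hence $\Delta_j u_0$ picks out exactly the single summand with $n=j$ (for $j$ large), up to the finitely many low-frequency terms. Applying Bernstein's inequality (Lemma \ref{lem2.1}) together with the lower bounds $\|\phi\|_{L^p(\R)}\ge C_1$ as in Lemma \ref{y1}, one gets $2^{js}\|\Delta_j u_0\|_{L^p}\approx j^{-2}$ for $j\ge 3$. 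Since $(j^{-2})_{j\ge 3}\in\ell^r$ for every $r\in[1,\infty]$ with $\ell^r$-norm comparable to a fixed constant, this yields $\|u_0\|_{\dot B^s_{p,r}}\approx\|u_0\|_{B^s_{p,r}}\approx 1$; the $L^p$ part of the nonhomogeneous norm is handled identically and is dominated by the homogeneous part here because $s>0$.

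For \eqref{z2}, the key point is that $\Delta_n u_0^{(2)}$ is, for $n$ large, exactly the $n$-th summand $\partial_1 a_n=n^{-2}2^{-n(s+1)}\,\partial_1[\theta(x)\cos(\frac{17}{12}2^nx_1)]$, whose leading term in powers of $2^n$ is $-n^{-2}2^{-ns}\,\theta(x)\sin(\frac{17}{12}2^nx_1)$. Now $u_0\cdot\nabla(\Delta_n u_0^{(2)})=u_0^{(1)}\partial_1(\Delta_n u_0^{(2)})+u_0^{(2)}\partial_2(\Delta_n u_0^{(2)})$, and among all terms produced I would isolate the one with the highest power of $2^n$: differentiating the fast oscillation $\sin(\frac{17}{12}2^nx_1)$ in $\partial_1(\Delta_n u_0^{(2)})$ produces a factor $\frac{17}{12}2^n$, and pairing it with the \emph{low-frequency} part of $u_0^{(1)}$ — namely the contribution of the $n=3$ summand, which is $O(1)$ in $L^\infty$ and not small — gives a term of size $n^{-2}2^{-ns}\cdot 2^n$ times a fixed oscillatory profile in $L^p$. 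Concretely, after multiplying through by $2^{ns}$, the dominant term is of the form $c\,(\text{smooth }O(1)\text{ profile})(x)\cdot\frac{17}{12}\,\theta(x)\cos(\tfrac{17}{12}2^nx_1)\cdot 2^n\cdot n^{-2}$ up to lower-order terms in $2^n$, whose $L^p$ norm is bounded below using $\liminf_{n\to\infty}\|\psi(x)\cos(\frac{17}{12}2^nx_1)\|_{L^p(\R)}\ge c>0$ for any fixed nonzero smooth compactly supported $\psi$ (the same Riemann--Lebesgue-type averaging fact already invoked in Lemma \ref{ZZ}). All remaining contributions — the self-interaction $u_0^{(2)}\partial_2(\Delta_n u_0^{(2)})$, the interactions of $\Delta_n u_0^{(2)}$ with the other high-frequency summands of $u_0$, and the subleading terms in which derivatives fall on $\theta$ rather than on the oscillation — are $O(n^{-2}2^{-ns}\cdot 1)$ or smaller after the $2^{ns}$ rescaling, hence negligible against $n^{-2}2^n$ for $n$ large.

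I expect the main obstacle to be bookkeeping rather than conceptual: one must carefully verify that the \emph{cross} interactions among the summands $a_m$, $a_n$ with $m\ne n$ do not secretly contribute at order $2^n$. This is where the lacunarity $\frac{17}{12}2^n$ is used twice — once to guarantee $\Delta_n u_0$ isolates a single summand, and once to ensure that when two oscillations $\cos(\frac{17}{12}2^mx_1)$ and $\cos(\frac{17}{12}2^nx_1)$ multiply, the product oscillates at frequencies $\frac{17}{12}(2^m\pm 2^n)$ which never land back near frequency $\frac{17}{12}2^n$ for $m\ne n$, so such terms are annihilated by $\Delta_n$ or, where they survive in $u_0\cdot\nabla\Delta_n u_0^{(2)}$ without a $\Delta_n$ in front, they carry at most a factor $2^{\max(m,n)}$ balanced by the gain $2^{-\max(m,n)(s+1)}m^{-2}n^{-2}$, which after summing over $m$ is still $o(n^{-2}2^{n})$ once $s>0$. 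A second minor technical point is to make the ``$O(1)$ profile from the $n=3$ term'' argument precise: one should fix the low-frequency piece $v(x):=(\partial_1 a_3,\dots)$ explicitly, note $v(0)\ne 0$ and $v$ smooth, so that $v(x)\theta(x)$ is a fixed nonzero Schwartz function, and apply the averaging lemma to $\psi=v^{(1)}\theta$. Once these are in place, combining the lower bound for the dominant term with the upper bounds for the remainder gives \eqref{z2} with $c_0$ any constant strictly below $\frac{17}{12}c\,\|\phi\|^2_{L^{2p}}\|v^{(1)}\theta\|$-type quantity, for all sufficiently large $n$.
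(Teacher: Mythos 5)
Your skeleton agrees with the paper's: use the frequency localization \eqref{s2} to identify $\De_n u_0^{(2)}=\pa_1 a_n$ for large $n$, single out the term $u_0^{(1)}\pa_1\pa_1 a_n$ in which both $x_1$-derivatives fall on the oscillation (gaining $2^{2n}$), and check that everything else --- $u_0^{(2)}\pa_1\pa_2 a_n$ and the terms where a derivative lands on $\theta$ --- loses a factor $2^{n}$ after the $2^{ns}$ normalization. Part \eqref{z1} is handled the same way in both arguments and is fine.

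The gap is in how you bound the leading term from below. You pair the oscillation $\cos(\tfrac{17}{12}2^nx_1)$ only with the $m=3$ summand of $u_0^{(1)}$ and declare the interactions with the summands $m\ge4$ negligible against $n^{-2}2^{n}$. That is false: the term $u_0^{(1),m}\,\pa_1\pa_1 a_n$ contributes, after multiplication by $2^{ns}$, a quantity of size $m^{-2}2^{-m(s+1)}\,n^{-2}2^{n}$ --- the same power of $2^n$ as the $m=3$ term, not a smaller one. (Your bookkeeping ``$2^{\max(m,n)}$ balanced by $2^{-\max(m,n)(s+1)}m^{-2}n^{-2}$'' drops one of the two factors of $2^{n}$ produced by $\pa_1\pa_1 a_n$; the correct count is $2^{2n}$ from the two derivatives times $2^{-m(s+1)}2^{-n(s+1)}$ from the coefficients.) Summing over $m\ge 4$ therefore yields a fixed positive multiple of $n^{-2}2^{n}$, and to close your argument you would have to verify that the Riemann--Lebesgue lower bound for the $m=3$ profile strictly dominates the triangle-inequality upper bound for the whole tail --- a delicate numerical comparison you have not made. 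The paper sidesteps this entirely: it keeps the full sum $h(x_1)=\sum_{k\ge3}k^{-2}2^{-k(s+1)}\phi^2(x_1)\cos(\tfrac{17}{12}2^kx_1)$ intact, notes that at $x_1=0$ every cosine equals $1$ so $h(0)=\phi^2(0)\sum_k k^{-2}2^{-k(s+1)}>0$, gets a pointwise lower bound on a small ball by continuity, and restricts the $L^p$ norm to that ball. Your route can be repaired in the same spirit by applying your averaging lemma to the single fixed profile $\psi:=u_0^{(1)}\theta$ (a nonzero continuous compactly supported function independent of $n$) rather than to the $m=3$ piece alone. Incidentally, your assertion that $v(0)\ne 0$ for $v=(-\pa_2 a_3,\pa_1 a_3,0)$ fails in the first component, since $\phi$ is even and hence $\phi'(0)=0$; only $\psi\not\equiv 0$ is actually needed for the averaging lemma.
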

\begin{proof}\,
Due to the simple fact \eqref{s2}, it holds that for some sufficiently large $n\in \mathbb{N}$
 \begin{align*}
 \Delta_{n}u_0^{(1)}=-\pa_2a_n\quad\text{and}\quad\Delta_{n}u_0^{(2)}=\pa_1a_n,
    \end{align*}
 which gives directly that
    \begin{align}\label{u0}
u_0\cd\na \De_{n}u_0^{(2)}&=
-u_0^{(1)}{\pa_1\pa_1}a_n+u_0^{(2)}\pa_1\pa_2a_n.
    \end{align}
\eqref{z1} is obvious.
To estimate \eqref{z2}, we should be emphasize that the leading term is $u_0^{(1)}{\pa_1\pa_1}a_n$ since the other terms in the right hand side of \eqref{u0} can be absorbed by the term $u_0^{(1)}{\pa_1\pa_1}a_n$. Thus we focus on the estimation of the leading term.
\begin{align*}
-n^{2}2^{n(s+1)}u_0^{(1)}{\pa_1\pa_1}a_n&=\f(\fr{17}{12}2^{n}\g)^2\sum_{k=3}^{\infty}k^{-2}2^{-k(s+1)}\phi^2(x_1)\cos \left(\frac{17}{12}2^kx_1\right)\\
&\quad\times \cos \left(\frac{17}{12}2^nx_1\right)\phi^2(x_3)\phi(x_2)\phi'(x_2)
+\text{Remainder terms}.
\end{align*}
For the Remainder terms, noticing that at most one partial derivative falls on cosine function, thus we easily find that
$$\f\|\text{Remainder terms}\g\|_{L^p(\R^3)}\leq C2^{n}.$$
Since $\sum_{k=3}^{\infty}k^{-2}2^{-k(s+1)}\phi^2(x_1)\cos \left(\frac{17}{12}2^kx_1\right)$ is a real-valued and continuous function on $\R$, then there exists some $\delta>0$ such that for any $x\in B_{\delta}(0):=\{x\in\mathbb{R}:\;|x|\leq\delta\}$
\begin{align*}
&\f|\sum_{k=3}^{\infty}k^{-2}2^{-k(s+1)}\phi^2(x_1)\cos \left(\frac{17}{12}2^kx_1\right)\g|
\geq\fr{\phi^2(0)}{2}\sum\limits^{\infty}_{k=3}k^{-2}2^{-k(s+1)}=:c_0>0.
\end{align*}
Thus we have for some sufficiently large $n\in \mathbb{N}$
\begin{align}\label{n1}
n^{2}2^{n(s+1)}\f\|u_0^{(1)}\pa_1\pa_1a_n\g\|_{L^p(\R^3)}&\geq c_02^{2n}-C2^n.
   \end{align}
By easy computations, we have
\begin{align}\label{n2}
n^{2}2^{n(s+1)}\f\|u_0^{(2)}\pa_1\pa_2a_n\g\|_{L^p(\R^3)}&\leq C2^{n}.
   \end{align}
Noticing that \eqref{u0}, from \eqref{n1} and \eqref{n2}, we obtain that for some sufficiently large $n\in \mathbb{N}$
\begin{align*}
n^{2}2^{n(s+1)}\f\|u_0\cd\na \De_{k}u_0^{(2)}\g\|_{L^p(\R^3)}\geq c_02^{2n}-C2^n,
   \end{align*}
which is nothing but the desired result \eqref{z2}. We complete the proof of Lemma \ref{ley2}.
\end{proof}

\begin{proposition}\label{pro2}
Assume that $\|u_0\|_{B^s_{p,r}}\approx1$. Under the assumptions of Theorem \ref{th3}, we have
\begin{align*}
\|\mathbf{S}_{t}(u_0)-u_0+t\mathbf{v}_0(u_0)\|_{{B}^{s-2}_{p,r}}\lesssim t^{2}.
\end{align*}
\end{proposition}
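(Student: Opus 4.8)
\textbf{Proof proposal for Proposition \ref{pro2}.}

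The plan is to mirror the proof of Proposition \ref{pro1}, but to work at the lower regularity level $B^{s-2}_{p,r}$, which is the natural space in which the error term $\mathbf{S}_t(u_0)-u_0+t\mathbf{v}_0(u_0)$ gains the full factor $t^2$ with a bound that is uniform in $n$. As before, write $u(t)=\mathbf{S}_t(u_0)$ and recall from Theorem \ref{th1} the a priori bounds $\|u(t)\|_{B^{s+k}_{p,r}}\les\|u_0\|_{B^{s+k}_{p,r}}$ for $k\in\{-2,-1,0,1\}$ on a short time interval $[0,T]$; note the assumption $\|u_0\|_{B^s_{p,r}}\approx1$ together with Lemma \ref{ley2} (and the explicit form of $u_0$) controls $\|u_0\|_{B^{s+k}_{p,r}}$ by a constant for $k\le0$, so the right-hand side constants below are genuinely harmless. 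First I would integrate the equation \eqref{CE} in the form $\pa_\tau u=-\mathcal{P}(u\cd\na u)-\Omega\,\mathcal{P}(e_3\times u)$ to get, by the product law in Lemma \ref{lp} (applied with $\sigma=s-1$) and Lemma \ref{lem:P},
\begin{align*}
\|u(t)-u_0\|_{B^{s-1}_{p,r}}&\leq\int_0^t\f(\|u\cd\na u\|_{B^{s-1}_{p,r}}+\|\mathcal{Q}(u\cd\na u)\|_{B^{s-1}_{p,r}}+\Omega\|\mathcal{P}(e_3\times u)\|_{B^{s-1}_{p,r}}\g)\dd\tau\\
&\leq Ct\f(\|u_0\|_{B^{s-1}_{p,r}}\|u_0\|_{B^s_{p,r}}+\|u_0\|_{B^{s-1}_{p,r}}\g)\leq Ct,
\end{align*}
and similarly $\|u(t)-u_0\|_{B^s_{p,r}}\leq Ct$ (using $\|u_0\|_{B^{s\pm1}_{p,r}}\les1$ here, which holds for this particular $u_0$). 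These are the analogues of \eqref{et1}--\eqref{et3} one rung lower.

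Next I would run the Mean Value Theorem step at level $s-2$: writing $\pa_\tau u+\mathbf{v}_0(u_0)=-\mathcal{Q}\big(u\cd\na u-u_0\cd\na u_0\big)-\big(u\cd\na u-u_0\cd\na u_0\big)-\Omega\,\mathcal{P}\big(e_3\times(u-u_0)\big)$, one gets
\begin{align*}
\|u(t)-u_0+t\mathbf{v}_0(u_0)\|_{B^{s-2}_{p,r}}&\leq\int_0^t\|\mathcal{Q}(u\cd\na u)-\mathcal{Q}(u_0\cd\na u_0)\|_{B^{s-2}_{p,r}}\dd\tau+\Omega\int_0^t\|\mathcal{P}(e_3\times(u-u_0))\|_{B^{s-2}_{p,r}}\dd\tau\\
&\quad+\int_0^t\|u\cd\na u-u_0\cd\na u_0\|_{B^{s-2}_{p,r}}\dd\tau.
\end{align*}
For the three integrands I would split $u\cd\na u-u_0\cd\na u_0=(u-u_0)\cd\na u+u_0\cd\na(u-u_0)$ and estimate each piece using the product law $\|f\cd\na g\|_{B^{s-2}_{p,r}}\les\|f\|_{B^{s-1}_{p,r}}\|g\|_{B^{s-1}_{p,r}}$ (valid since $s-1>3/p$, so $B^{s-2}_{p,r}$ is an algebra-type estimate under the gradient), the second bound of Lemma \ref{lem:P} for the $\mathcal{Q}$ term, and the boundedness of $\mathcal{P}$ on $B^{s-2}_{p,r}$ for the Coriolis term; each is controlled by $C\|u(\tau)-u_0\|_{B^{s-1}_{p,r}}\les C\tau$ (the factors $\|u\|_{B^{s-1}_{p,r}},\|u_0\|_{B^{s-1}_{p,r}}$ being $\les1$). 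Integrating in $\tau$ from $0$ to $t$ produces $Ct^2$, which is the claim.

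The main obstacle — and the reason the statement is phrased at $B^{s-2}_{p,r}$ rather than $B^s_{p,r}$ — is that the constant must not depend on $n$ through any positive power of $2^n$. At level $B^s_{p,r}$ the commutator-type term $u\cd\na(u-u_0)$ would cost $\|u_0\|_{B^{s+1}_{p,r}}\approx2^n$ (compare \eqref{et3}--\eqref{et4} and the quantity $\mathbf{E}(u_0)$), destroying uniformity; dropping two derivatives lets every factor of $u_0$ appear at regularity $\le s-1$, where $\|u_0\|_{B^{s-1}_{p,r}}$, $\|u_0\|_{B^{s-2}_{p,r}}\to0$ as $n\to\infty$ and in any case stay bounded. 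So the real work is bookkeeping: verifying that in the splitting above the "bad" factor always lands on $u-u_0$ (measured in $B^{s-1}_{p,r}$, for which we have the $O(t)$ bound) while the "good" factor is a $u$ or $u_0$ measured in $B^{s-1}_{p,r}$ or lower, and invoking Lemma \ref{lp} and Lemma \ref{lem:P} at exactly the indices where their hypothesis \eqref{eq:spr1} still applies. Once that is checked, Grönwall is not even needed — a single integration suffices — and the proposition follows.
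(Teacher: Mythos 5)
Your argument is essentially the paper's proof: both integrate the equation in time, use the $O(t)$ bound $\|u(\tau)-u_0\|_{B^{s-1}_{p,r}}\les \tau$ from \eqref{et1}, and feed it back through a product estimate at the lower regularity $s-2$ to gain the second factor of $t$; no Gr\"onwall is needed in either version. Two remarks. First, your product law $\|f\cd\na g\|_{B^{s-2}_{p,r}}\les\|f\|_{B^{s-1}_{p,r}}\|g\|_{B^{s-1}_{p,r}}$ is true but is \emph{not} an instance of Lemma \ref{lp}, which requires $\sigma>0$, whereas $s-2$ can be negative under \eqref{eq:spr1} (take $s$ close to $1+3/p$ with $p$ large); you either need the standard paraproduct estimate $B^{s-1}_{p,r}\times B^{s-2}_{p,r}\to B^{s-2}_{p,r}$ (valid here since $s-1>3/p$ and $s-2>3/p-3$), or you can do what the paper implicitly does: write $u\cd\na u-u_0\cd\na u_0=\div\big((u-u_0)\otimes u+u_0\otimes(u-u_0)\big)$, let the divergence absorb one derivative via $\|\div F\|_{B^{s-2}_{p,r}}\les\|F\|_{B^{s-1}_{p,r}}$, and then apply Lemma \ref{lp} at level $\sigma=s-1>0$ together with $B^{s-1}_{p,r}\hookrightarrow L^\infty$. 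Second, your side claim that $\|u_0\|_{B^{s+1}_{p,r}}\les1$ for the data of Theorem \ref{th3} is false: $2^{n(s+1)}\|\De_nu_0\|_{L^p}\approx n^{-2}2^{n}$ is unbounded, so the asserted bound $\|u(t)-u_0\|_{B^{s}_{p,r}}\le Ct$ is unjustified for this $u_0$. Fortunately you never use it — only the $B^{s-1}_{p,r}$ difference bound and the uniform bounds on $\|u\|_{B^{s}_{p,r}},\|u_0\|_{B^{s}_{p,r}}$ enter your final estimate — so the proof stands once the product law is justified as above.
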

\begin{proof}
It holds that
\bbal
\|u(t)\|_{L^\infty_T(B^s_{p,r})}\les\|u_0\|_{B^s_{p,r}}\les1.
\end{align*}
From \eqref{et1}, we obtain
\bal
\|u(t)-u_0\|_{B^{s-1}_{p,r}}\les t\|u_0\|_{B^{s}_{p,r}}\les t.
\end{align}
By the Newton-Leibniz formula again, we get
\begin{align}
&\|\mathbf{S}_{t}(u_0)-u_0+t\mathbf{v}_0(u_0)\|_{{B}^{s-2}_{p,r}}
\leq \int^t_0\|\partial_\tau u+\mathbf{v}_0\|_{{B}^{s-2}_{p,r}} \dd\tau \nonumber\\
\les&~ \int^t_0\|\mathcal{P}\f(u\cd\na u-u_0\cd\na u_0+\Omega e_3\times (u-u_0)\g)\|_{{B}^{s-2}_{p,r}} \dd\tau
\nonumber\\
\les&~  \int^t_0\f(\|(u-u_0)(u+u_0)\|_{B^{s-1}_{p,r}}+\|u-u_0\|_{B^{s-1}_{p,r}}\g)\dd\tau\nonumber
\\
\les&~ \int^t_0\|u-u_0\|_{B^{s-1}_{p,r}} \dd\tau\nonumber
\les t^2.
\end{align}
We complete the proof of Proposition \ref{pro2}.
\end{proof}

{\bf Proof of Theorem \ref{th3}.}   Setting $\mathbf{w}:=\mathbf{S}_{t}(u_0)-u_0+t\mathbf{v}_0(u_0)$
$$\mathbf{S}_{t}(u_0)-u_0=-t\mathbf{v}_0(u_0)+\mathbf{w} \quad \text{and}\quad \mathbf{v}_0(u_0)=u_0\cd\na u_0-\mathcal{Q}(u_0\cd\na u_0)+\Omega\mathcal{P}(e_3\times u_0).$$

By the triangle inequality, we deduce that
\bal\label{zn}
\f\|\f(\mathbf{S}_{t}(u_0)-u_0\g)^{(2)}\g\|_{B^{s}_{p,r}}
&\geq2^{{ns}}
\f\|\De_{n}\f(\mathbf{S}_{t}(u_0)-u_0\g)^{(2)}\g\|_{L^p}
=2^{{ns}}\f\|\De_{n}\big(t\mathbf{v}_0(u_0)-\mathbf{w}\big)^{(2)}\g\|_{L^p}
\nonumber\\&\geq t2^{{ns}}\f\|\De_{n}\f(\mathbf{v}_0(u_0)\g)^{(2)}\g\|_{L^p}
-2^{{2n}}2^{{n(s-2)}}
\f\|\De_{n}\mathbf{w}\g\|_{L^p}\nonumber\\
&\geq t2^{{n}s}\f\|\De_{n}\big(u_0\cd\na u_0^{(2)}\big)\g\|_{L^p}-
t2^{{n}s}\f\|\De_{n}\mathcal{Q}(u_0\cd\na u_0)\g\|_{L^p}\nonumber\\
&\quad- t2^{{n}s}\f\|\De_{n}\big(\Omega\mathcal{P}(e_3\times u_0))\big)\g\|_{L^p}-C2^{2{n}}\|\mathbf{w}\|_{\dot{B}^{s-2}_{p,\infty}}
\nonumber\\&\geq t2^{{n}s}\f\|u_0\cd\na\De_{n}u_0^{(2)}\g\|_{L^p}-t2^{ns}\f\|[\De_{n},u_0]\cd\na u_0^{(2)}\g\|_{L^p}\nonumber\\
&\quad-
Ct2^{{n}s}\f\|\De_{n}\mathcal{Q}(u_0\cd\na u_0)\g\|_{L^p}-C t\|u_0\|_{B^{s}_{p,\infty}}-C2^{2n}t^2\nonumber\\
&\geq t2^{ns}\f\|u_0\cd\na \De_{n}u_0^{(2)}\g\|_{L^p}-C t-C2^{2{n}}t^2,
\end{align}
where we have used the fact:
\bbal
\big\|2^{{n}s}\|[\De_{n},u_0]\cd\na u_0^{(2)}\|_{L^p}\big\|_{\ell^\infty}\les \|\na u_0\|_{L^\infty}\|u_0\|_{B^{s}_{p,r}}\les 1,
\end{align*}
and
\bbal
2^{{ns}}\|\De_{n}\mathcal{Q}(u_0\cd\na u_0)\|_{L^p}&= 2^{{ns}}\|\De_{n}((-\Delta)^{-1}\nabla(\pa_iu^j_0\pa_j u_0^i))\|_{L^p}\\
&\approx 2^{{n(s-1)}}\|\De_{n}(\pa_iu^j_0\pa_j u_0^i)\|_{L^p}\\
&\les \|\pa_iu^j_0\pa_j u_0^i\|_{B^{s-1}_{p,\infty}}\les \|u_0\|^2_{B^{s}_{p,\infty}}\les1.
\end{align*}
Then, using Lemma \ref{ley2}, we obtain from \eqref{zn} that
\bbal
\f\|\f(\mathbf{S}_{t}(u_0)-u_0\g)^{(2)}\g\|_{B^{s}_{p,r}}\geq ctn^{-2}2^{n}-Ct-C2^{2{n}}t^2,
\end{align*}
which implies
\bbal
t^{-\alpha}\f\|\f(\mathbf{S}_{t}(u_0)-u_0\g)^{(2)}\g\|_{B^{s}_{p,r}}\geq ct^{1-\alpha}n^{-2}2^{n}-Ct^{1-\alpha}-C2^{2{n}}t^{2-\alpha},
\end{align*}
Thus, picking $t^{1-\alpha}_n=n^32^{-n}$ with large $n$, we have
\bbal
t^{-\alpha}_n\f\|\f(\mathbf{S}_{t}(u_0)-u_0\g)^{(2)}\g\|_{B^{s}_{p,r}}&\geq cn-Cn^32^{-n}-Cn^{6}t_n^{\alpha}
\geq \tilde{c}n.
\end{align*}
This completes the proof of Theorem \ref{th3}.
\section{Proof of Theorem \ref{th4}}\label{sec5}

We define the initial data $u_0(x)$ as
\bbal
&u_0(x):=\f(-\pa_2,\pa_1,0\g)\sum\limits^{\infty}_{n=3}b_n(x), \quad\text{where}\\
&b_n(x):= 2^{-n(s+1)}\theta(x)\cos \f(\frac{17}{12}2^{n}x_1\g).
\end{align*}
Following similar procedure as that in Theorem \ref{th3}, then we have
\bbal
\f\|\f(\mathbf{S}_{t}(u_0)-u_0\g)^{(2)}\g\|_{B^{s}_{p,\infty}}
&\geq2^{{ns}}
\f\|\De_{n}\f(\mathbf{S}_{t}(u_0)-u_0\g)^{(2)}\g\|_{L^p}
=2^{{ns}}\f\|\De_{n}\big(t\mathbf{v}_0(u_0)-\mathbf{w}\big)^{(2)}\g\|_{L^p}
\nonumber\\
&\geq t2^{{n}s}\f\|\De_{n}\big(u_0\cd\na u_0^{(2)}\big)\g\|_{L^p}-
t2^{{n}s}\f\|\De_{n}\mathcal{Q}(u_0\cd\na u_0)\g\|_{L^p}\nonumber\\
&\quad- t2^{{n}s}\f\|\De_{n}\big(\Omega\mathcal{P}(e_3\times u_0))\big)\g\|_{L^p}-C2^{2{n}}\|\mathbf{w}\|_{\dot{B}^{s-2}_{p,\infty}}
\nonumber\\
&\geq ct2^{n}-Ct-C2^{2n}t^2.
\end{align*}
Thus, picking $t2^{kn}\approx\ep$ with small $\ep$, we have
\bbal
\f\|\f(\mathbf{S}_{t}(u_0)-u_0\g)^{(2)}\g\|_{B^{s}_{p,\infty}}\geq c\ep-C\ep^2\geq \tilde{c}\ep.
\end{align*}
This completes the proof of Theorem \ref{th4}.

\section{Discussion and Conclusion }
In this paper, we consider the Cauchy problem for the 3D Euler equations with the Coriolis force in the whole space. We first establish the local-in-time existence and uniqueness of solution to this system in $B^s_{p,r}(\R^3)$. Then we prove that the Cauchy problem is ill-posed in two different sense:
(1) the solution of this system is not uniformly continuous dependence on the initial data in the same Besov spaces, which extends the recent work of Himonas-Misio{\l}ek \cite[Comm. Math. Phys., 296, 2010]{HM1} to the more general framework of Besov spaces;
(2) the solution of this system cannot be H\"{o}lder continuous in time variable in the same Besov spaces. In particular, the solution of the system is discontinuous in the weaker Besov spaces at time zero. To the best of our knowledge, our work is the first one addressing the issue on the failure of H\"{o}lder continuous in time of solution to the classical Euler equations with(out) the Coriolis force. In this paper, our contribution on the continuous properties for the 3D incompressible rotating Euler equations, unfold in three aspects: by constructing some proper, new and different initial conditions, we showed the non-uniform continuous of the solution map (Theorem \ref{th2}), proved the failure of H\"{o}lder regularity of the solution map (Theorem \ref{th3}) and  established the discontinuous at $t = 0$ of the solution map (Theorem \ref{th4}).

{\bf Future directions.} In the case when $p\in(1,\infty)$, since the Coriolis force term has no impact on the well-posedness results, we can prove Theorem \ref{th1} by following the proof of the local well-posedness results for the Euler equations (see \cite[Theorem 7.1 ]{B} and \cite[Theorem 1.1 ]{GLY}). However, in the case when $p=1$ or $p=\infty$, due to the the appearance of the dispersive effect of rotation which leads to the fact that Riesz transform does not map continuously
from $L^\infty(\text{or}\;L^1)$ to $L^\infty(\text{or}\;L^1)$, we will encounter the main difficulty when establishing the uniform bounds of solution in $L^\infty(\text{or}\;L^1)$-based spaces. We believe that, the dispersive effect of rotation is able to prevent well-posedness for the Cauchy problem \eqref{CE}. More precisely, we expect that the Cauchy problem \eqref{CE} is ill-posed in $L^\infty(\text{or}\;L^1)$-based Besov spaces. We will investigate this interesting problem in the future work.

\section*{Declarations}

\noindent\textbf{Availability of data and materials}\\ No data was used for the research described in the article.
\vspace*{1em}

\noindent\textbf{Conflict of interest}\\
The authors declare that they have no conflict of interest.
\vspace*{1em}

\noindent\textbf{Funding}\\
The authors would like to thank the anonymous referees for valuable comments and suggestions which greatly improved the presentation of this paper. J. Li is supported by the National Natural Science Foundation of China (No. 12161004), Innovative High end Talent Project in Ganpo Talent Program (No. gpyc20240069), Training Program for Academic and Technical Leaders of Major Disciplines in Ganpo Juncai Support Program (No. 20232BCJ23009), Jiangxi Provincial Natural Science Foundation (No. 20252BAC210004).
N. Zhu is partially supported by the National Natural Science Foundation of China (No. 12161055), Training Program for Academic and Technical Leaders of Major Disciplines in Ganpo Juncai Support Program (No. 20243BCE51079).

\end{document}